\title{Tails of weakly dependent random vectors
}
\author{Peter Tankov\footnote{Laboratoire de Probabilit\'es et Mod\`eles Al\'eatoires, Universit\'e Paris Diderot, Paris, France. Email:
  \texttt{tankov@math.univ-paris-diderot.fr}}}
\date{}
\newtheorem{theorem}{Theorem}
\newtheorem{corollary}{Corollary}
\newtheorem{lemma}{Lemma}
\newtheorem{proposition}{Proposition}
\theoremstyle{remark}
\newtheorem{remark}{Remark}
\theoremstyle{definition}
\newtheorem{definition}{Definition}
\newcommand{\cm}{{\EuFrak B}}
\newcommand{\bs}{\boldsymbol}
\begin{document}
\maketitle
\begin{abstract}
We introduce a new functional measure of tail dependence for weakly dependent
(asymptotically independent) random vectors, termed \emph{weak
  tail dependence function}. The new measure is defined at the level
of copulas and we compute it for several copula families such as
the Gaussian copula, copulas of a class of Gaussian mixture models,
certain Archimedean copulas and extreme value copulas. The new measure allows to quantify the
tail behavior of certain functionals of weakly dependent random vectors at
the log scale. 

\end{abstract}

Key words: tail dependence, asymptotic independence, copulas, regular variation, Gaussian mixtures

MSC 2010: 60F10, 62G32

\section{Introduction}
The goal of this paper is to propose a new approach to describe
asymptotic independence (weak tail dependence), and to use this
approach to 
study the asymptotic behavior of tail
probabilities like
\begin{align}
\Pr[\min(X_1,\dots,X_n) \geq t ]\quad \text{as $t\to \infty$}\label{funcmin}
\end{align}
or 
\begin{align}
\Pr[X_1+\dots +X_n \leq  t ]\quad \text{as $t\to 0$ with
  $X_1\geq 0,\dots,X_n \geq 0$.}\label{funcsum}
\end{align}
when the components of the vector $(X_1,\dots,X_n)$ are asymptotically
independent. 

In multivariate extreme value theory, asymptotic
independence refers to the situation when
the probability that any two components of a random vector are
simultaneously large, on a suitable scale, is negligible compared to the probability that
any one component is large. This ensures that suitably renormalized
componentwise maxima of a sequence of independent copies of the vector become asymptotically independent. The components of an asymptotically
independent random vector may be dependent in the usual sense. For instance, the multivariate Gaussian distribution is known to
be asymptotically independent for any nondegenerate covariance
matrix \citep{sibuya1959bivariate}. 

Asymptotic independence is a natural property 
which is often observed in the data coming from many different application
domains
\citep{de1998sea,maulik2002asymptotic,draisma2004bivariate,ledford1997modelling,ledford1996statistics,heffernan2005hidden},
and is an inherent feature of many widely used models, for example, in
finance. In addition to
the already mentioned multivariate Gaussian, one can quote, e.g., the
multivariate generalized hyperbolic \citep{schlueter2012weak}, and more
generally all Gaussian mixture
models with exponentially decaying mixing variable (see Section
\ref{taildep.sec} below).  It is therefore important to understand the
implications of asymptotic independence assumption on the tail behavior of
random vectors. 

In the literature, asymptotic independence is often introduced using
the property of hidden regular variation
\citep{resnick2002hidden,maulik2004characterizations,resnick2007heavy},
which is a refinement of the coefficient of tail dependence introduced
in \cite{ledford1996statistics,ledford1997modelling}.  Recall that a random vector
$\bs X $ with values in $[0,\infty)^n$ is said to be multivariate
regularly varying if there exists a function $b(t)\to \infty$ as $t\to
\infty$ and a
non-negative Radon measure $\nu\neq 0$ such that 
\begin{align}
t \Pr\Big[\frac{\bs X}{b(t)}\in \cdot\Big] \to \nu\label{mrv}
\end{align}
as $t\to \infty$ in the sense of vague convergence of measures on the
cone $E = [0,\infty]^n \setminus \{0\}$. Now, $\bs X$ is said to possess the
property of hidden regular variation if in addition to \eqref{mrv},
there exists a non-decreasing function $b^*(t)\to \infty$ such that
$b(t)/b^*(t) \to \infty$ as $t\to \infty$ and a Radon measure
$\nu^*\neq 0$ such that 
\begin{align}
t \Pr\Big[\frac{\bs X}{b^*(t)}\in \cdot\Big] \to \nu^*\label{hrv}
\end{align}
on the cone $E^*:=E\setminus \cup_{i=1}^n L_i$, where $L_i$ is the
i-th coordinate axis. In other words, under hidden regular variation, the
probability $\Pr[X_i \geq x_i t, X_j \geq x_j t]$ for any $i\neq
j$ decays regularly, but at a faster rate than the tail probabilities
of individual components. 

The theory of hidden regular variation along with its more recent
extensions \citep{das2013living} allows to quantify the asymptotic
behavior of the tail
probabilities like \eqref{funcmin} or \eqref{funcsum} but it is well
suited for distributions with fat power-law tails and tail equivalent
margins and does not readily apply to, say, models with exponential
tail decay which are widely used in finance\footnote{In some cases,
  one can solve the problem of non-equivalent margins using the rank
  transform, which preserves the hidden regular variation
  \citep{heffernan2005hidden}, however, it does not preserve the
  structure of some tail functionals which one would like to compute
  in risk management applications.}. An alternative (but related)
approach to studying dependence within asymptotic independence,
consists in observing that for, say, identically
distributed positive random variables, asymptotic independence in the
lower tail implies that 
$$
\lim_{x\downarrow 0} \frac{\Pr[X_1\leq x,\dots,X_n \leq
  x]}{\Pr[X_1\leq x]} = 0,
$$
and information about ``residual'' dependence may be
extracted from the multivariate distribution function by studying a
related limit on the logarithmic scale. This leads to the so called
coefficient of weak tail dependence \citep{coles1999dependence,schlueter2012weak,hashorva2010residual,heffernan2000directory} usually defined (for the case of the lower index of a two-dimensional copula $C$) as
\begin{align}
\lim_{u\to 0} \frac{2\ln u}{\ln C(u,u)}-1. \label{tdcoef}
\end{align}
This coefficient is defined at the level of copulas, and hence does not require
strong assumptions on the margins.
It exists under much less stringent conditions than those required for
hidden regular variation (since here one is only interested in
log-scale asymptotics), but the knowledge of the weak tail dependence
coefficient does not provide information on the functionals like
\eqref{funcmin} or \eqref{funcsum}, which also depend on the margins. 

In this paper, we develop a functional extension of the weak tail dependence
coefficient \eqref{tdcoef}, which we term \emph{weak tail dependence
  function}. For the lower tail of the copula, the weak tail dependence function is
defined by 
\begin{align}
\chi(\lambda_1,\dots,\lambda_n) = \lim_{u\to
  0}\frac{\min_i \ln u^{\lambda_i}}{\ln C(u^{\lambda_1},\dots,u^{\lambda_n})},\quad
\lambda_1,\dots,\lambda_n \geq 0.  \label{chi.intro}
\end{align}
The extension of the weak tail dependence coefficient by the weak tail
dependence function is somewhat similar in spirit to the extension of
the strong tail dependence coefficient by the tail dependence function
\citep{joe10,kluppelberg2008semi,kluppelberg2007estimating}. 

We compute the weak tail
dependence function for commonly used families of copulas, notably for
the Gaussian copula and for all Gaussian mixture models with
exponentially decaying mixing variable, and show that
log-scale asymptotics of functionals like \eqref{funcmin} or
\eqref{funcsum} are expressed in terms of the weak tail dependence
function under weak assumptions on the margins. 
In particular (see Theorem \ref{main.thm}), if $X_1,\dots,X_n$ are random variables with
values in $(0,\infty)$, with 
survival functions $\bar F_1,\dots,\bar F_n$ and survival copula
$\overline C$, such that for $k=1,\dots,n$, 
$$
\ln \bar F_k(x) \sim \lambda_k \ln \bar F_0(x)
$$
as $x\to + \infty$ for some constants $\lambda_k$ and some
function $\bar F_0$, then 
$$
\ln \Pr[\min(X_1,\dots,X_n) \geq t ] \sim \frac{\min_i \ln \bar
  F_i(t)}{\bar \chi(\lambda_1,\dots,\lambda_n)},
$$
as $t\to +\infty$, where $\bar \chi$ is computed from the copula $\overline C$ using the
formula \eqref{chi.intro}. On the other hand (see Corollary
\ref{main2.thm}), 
 if $X_1,\dots,X_n$ are random variables with
values in $(0,\infty)$, with 
distribution functions $F_1,\dots,F_n$ and copula
$C$, such that for $k=1,\dots,n$, 
$\ln F_k(x)$ is slowly varying as $x\downarrow 0$ and 
$$
\ln F_k(x) \sim \lambda_k \ln  F_0(x)
$$
for some constants $\lambda_k$ and some
function $F_0$, then 
$$
\ln \Pr[X_1 + \dots + X_n \leq t ] \sim \frac{\min_i \ln 
  F_i(t)}{\chi(\lambda_1,\dots,\lambda_n)},
$$
as $t\downarrow0$. 

The assumption that the distribution functions are slowly varying includes all distributions with
  regularly varying left tail as well as parametric families such as
  log-normal, gamma, Weibull and many distributions from the financial
  mathematics literature. The assumption of asymptotic equivalence on
  the log scale ensures that the laws of components have similar
asymptotic behavior, but nevertheless is not very restrictive: for
example, different components can follow log-normal distributions with different
parameters, or have regularly varying tails with different indices.  Our method thus
provides less information than hidden regular variation (which allows
to compute the sharp asymptotics) but on the other hand is applicable in a much wider
context.

The rest of the paper is structured as follows. Section \ref{def.sec}
presents the definition and the basic properties of the weak
tail dependence function. The explicit form of the weak tail
dependence function for common copula families is given in Section
\ref{taildep.sec}. Finally, Section \ref{tailsum.sec} presents the
link between the weak tail dependence function and the asymptotics of tail probabilities like
\eqref{funcmin} and \eqref{funcsum} and illustrates the theory with an
example coming from financial mathematics.

\paragraph{Remarks on notation}
Throughout this paper, we write $f\sim g$ as $x$ tends to $a$ whenever
$$\lim_{x\to a}\frac{f(x)}{g(x)} = 1$$ and $f \lesssim g$ whenever $$\limsup_{x\to a}\frac{f(x)}{g(x)} \leq 1.$$  We recall that a function $f$ is
called slowly varying as $x$ tends to $0$ whenever $$\lim_{x\to
  0}\frac{f(\alpha x)}{f(x)} = 1$$ for all $\alpha >0$. 
Finally, we
define
$$
\Delta_n:= \{\bs w\in \mathbb R^n: w_i\geq 0, i=1,\dots,n, \sum_{i=1}^n
w_i = 1\}. 
$$

We also recall that the copula of a random vector
$(Y_1,\dots,Y_n)$ is a function $C:[0,1]^n: [0,1]$, satisfying the
assumptions
\begin{itemize}
\item $dC$ is a positive measure in the sense of Lebesgue-Stieltjes integration,
\item $C(u_1,\dots,u_n) = 0$ whenever $u_k=0$ for at least one $k$, 
\item $C(u_1,\dots,u_n) = u_k$ whenever $u_i = 1$ for all $i\neq k$,
\end{itemize}
and such that 
$$
\Pr[Y_1\leq y_1,\dots,Y_n \leq y_n] = C(\Pr[Y_1\leq
y_1],\dots,\Pr[Y_n\leq
y_n] ),\quad (y_1,\dots,y_n)\in \mathbb R^n.
$$
A copula exists by Sklar's theorem and is uniquely defined whenever
the marginal distributions of $Y_1,\dots,Y_n$ are continuous. We refer to
\cite{nelsen} for details on copulas. 

\section{Weak tail dependence function}
\label{def.sec}

\begin{definition}
The \emph{weak lower tail
  dependence function} $\chi(\lambda_1,\dots,\lambda_n)$ of a copula $C$
is defined by
\begin{align}
\chi(\lambda_1,\dots,\lambda_n) = \lim_{u\to
  0}\frac{\min_i\ln u^{\lambda_i}}{\ln C(u^{\lambda_1},\dots,u^{\lambda_n})},\label{defchi}
\end{align}
whenever the limit exists for all $\lambda_1,\dots,\lambda_n \geq 0$ such
that $\lambda_k>0$ for at least one $k$, with the standard convention that
$\ln 0 = \infty$ and ${1}/{\infty} = 0$. 
\end{definition}

\begin{remark}
In this paper, we focus on the lower tail of the copula, hence the term weak lower
tail dependence function. Weak tail
dependence functions for other tails of the copula may be defined in a
similar manner. For example, weak upper tail dependence function of a
copula $C$ is defined by
$$
\bar\chi(\lambda_1,\dots,\lambda_n) = \lim_{u\to
  0}\frac{\min_i\ln u^{\lambda_i}}{\ln \overline{C}(u^{\lambda_1},\dots,u^{\lambda_n})},
$$
where $\overline{C}$ is the survival copula which corresponds to
$C$. Properties of the weak upper tail dependence function and the form of this function can be easily deduced
from the properties and the form of the weak lower tail dependence
function given in this paper. 
\end{remark}

\begin{remark}
Assume that $C(u,\dots,u)> 0$ for $u>0$ and let $\widehat C(z_1,\dots,z_n) = C(e^{-z_1}, \dots, e^{-z_n})$ for
$z_1,\dots,z_n\geq 0$. Then \eqref{defchi} is equivalent to 
$$
\ln \widehat C (\lambda_1 z, \dots, \lambda_n z) \sim - 
\frac{\max_i \lambda_i}{\chi(\lambda_1,\dots,\lambda_n)} z \quad
\text{as $z\to +\infty$.}
$$
Therefore, existence of a weak tail dependence function is a kind of
multivariate regular variation property with index $1$ of the logarithm of the copula at the logarithmic
scale. Due to the presence of this log scale transformation, the
existence of the weak tail dependence function is not directly related
to the regular variation properties of the distribution at the
original scale, in particular, it is not implied by the hidden regular
variation property. 
\end{remark}
\paragraph{Properties of the weak lower tail dependence function}
The weak lower tail
  dependence function $\chi(\lambda_1,\dots,\lambda_n)$ of a copula
is order $0$ homogeneous: for all $r>0$,
  $$\chi(r\lambda_1,\dots,r\lambda_n) =  \chi(\lambda_1,\dots,\lambda_n).$$ 
It is increasing with respect to the concordance order of copulas and
admits the following bounds (the upper bound is due to the
  Frechet-Hoeffding upper bound on the copula):
$$
0\leq \chi(\lambda_1,\dots,\lambda_n) \leq 1.
$$
For the independence copula $C_{\perp}(u_1,\dots,u_n) = u_1\dots
u_n$, we get 
$$\chi(\lambda_1,\dots,\lambda_n) = \frac{\max_i \lambda_i}{\sum_i \lambda_i}.$$ 
The upper bound is attained for the complete dependence copula 
$C_{\parallel}(u_1,\dots,u_n) = \min(u_1,\dots,u_n)$. More
importantly, as shown by the following proposition, for any copula
with nonzero strong tail dependence coefficient in the lower tail, the weak lower tail dependence function
equals its upper bound. This measure of tail dependence is thus
relevant for distributions whose components are asymptotically
independent. Before stating the result, we recall the following
definition. 
\begin{definition}
The \emph{strong tail dependence coefficient} (for the lower tail) of a
copula $C$ is defined by
$$
\lambda_L = \lim_{u\downarrow 0} \frac{C(u,\dots,u)}{u},
$$ 
whenever the limit exists. When $\lambda_L>0$, the copula is said to have
the property of asymptotic dependence in the lower tail.
\end{definition}
\begin{proposition}\label{tailcop.prop}
Assume that a copula function $C$ has strong tail dependence coefficient
$\lambda_L>0$. Then, the weak lower tail dependence function of $C$
is equal to the upper bound:
$$
\chi(\lambda_1,\dots,\lambda_n) = 1,\quad \forall
\lambda_1,\dots,\lambda_n\geq 0. 
$$
\end{proposition}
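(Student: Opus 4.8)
The plan is to prove the matching lower bound $\chi(\lambda_1,\dots,\lambda_n)\geq 1$, since the upper bound $\chi(\lambda_1,\dots,\lambda_n)\leq 1$ has already been recorded above (it is a consequence of the Fr\'echet--Hoeffding upper bound $C(u_1,\dots,u_n)\leq \min_i u_i$). As a first reduction, note that for $0<u<1$ one has $\min_i\ln u^{\lambda_i}=(\ln u)\max_i\lambda_i$; write $m:=\max_i\lambda_i$, which is strictly positive by the standing assumption that at least one $\lambda_k>0$. Also, from $C(u^{\lambda_1},\dots,u^{\lambda_n})\leq \min_i u^{\lambda_i}=u^m<1$ the logarithm in the denominator is well defined and negative for $0<u<1$.

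The key step is a monotonicity-plus-diagonal argument that bounds the copula from below. Since $u\mapsto u^{\lambda_i}$ is decreasing on $(0,1)$ and $\lambda_i\leq m$, we get $u^{\lambda_i}\geq u^{m}$ for every $i$; because $C$ is nondecreasing in each argument (as $dC$ is a positive measure), this yields
$$
C(u^{\lambda_1},\dots,u^{\lambda_n})\geq C(u^m,\dots,u^m).
$$
Now I invoke the strong tail dependence assumption: setting $v=u^m\downarrow 0$, we have $C(v,\dots,v)/v\to\lambda_L>0$, so for all $u$ small enough $C(u^m,\dots,u^m)\geq \tfrac{\lambda_L}{2}\,u^m$. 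Taking logarithms gives
$$
\ln C(u^{\lambda_1},\dots,u^{\lambda_n})\geq \ln\tfrac{\lambda_L}{2}+m\ln u .
$$

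Finally I would combine the estimates. For $u$ small, both $\ln C(u^{\lambda_1},\dots,u^{\lambda_n})$ and $\ln\tfrac{\lambda_L}{2}+m\ln u$ are negative, so dividing the negative quantity $m\ln u$ by the two sides (and reversing the inequality accordingly) gives
$$
\frac{m\ln u}{\ln C(u^{\lambda_1},\dots,u^{\lambda_n})}\geq \frac{m\ln u}{\ln\tfrac{\lambda_L}{2}+m\ln u}\;\longrightarrow\; 1\quad\text{as }u\to 0,
$$
since $m\ln u\to-\infty$ makes the additive constant $\ln\tfrac{\lambda_L}{2}$ negligible. Hence $\liminf_{u\to 0}$ of the defining ratio is at least $1$; together with the known upper bound, the limit in \eqref{defchi} exists and equals $1$, which is the claim. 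I do not expect a genuine obstacle here: the only point requiring care is bookkeeping the directions of inequalities when dividing by quantities that are negative and tend to $-\infty$, and checking (as noted above) that $C(u^{\lambda_1},\dots,u^{\lambda_n})$ lies strictly between $0$ and $1$ for small $u$ so that the logarithms are meaningful.
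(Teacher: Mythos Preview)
Your proof is correct and follows essentially the same route as the paper's: bound $C(u^{\lambda_1},\dots,u^{\lambda_n})$ below by $C(u^m,\dots,u^m)$ via monotonicity of the copula, invoke $\lambda_L>0$ to get a lower bound of the form $c\,u^m$, take logarithms, and combine with the Fr\'echet--Hoeffding upper bound. One small slip in wording: the map $u\mapsto u^{\lambda_i}$ is \emph{increasing} on $(0,1)$ for $\lambda_i>0$; what you actually need (and use correctly) is that for fixed $u\in(0,1)$ the map $\lambda\mapsto u^\lambda$ is decreasing, which gives $u^{\lambda_i}\geq u^m$ since $\lambda_i\leq m$.
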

\begin{proof}
From the definition of $\lambda_L$, for any $\varepsilon>0$ and $u$
sufficiently small, 
$$
C(u,\dots,u) \geq (\lambda_L - \varepsilon) u.
$$ 
Using the fact that the copula is increasing in each argument, we
have, for $u$ sufficiently small,
$$
\frac{\ln C(u^{\lambda_1},\dots,u^{\lambda_n})}{\ln u} \leq
\frac{\ln (\lambda_L -
    \varepsilon)+ \max(\lambda_1,\dots,\lambda_n)\ln u }{\ln u},
$$
which shows that 
$$
\limsup_{u\downarrow 0} \frac{\ln
  C(u^{\lambda_1},\dots,u^{\lambda_n})}{\ln u}  = \max(\lambda_1,\dots,\lambda_n).
$$
Combining this with the Frechet-Hoeffding upper bound on the copula,
the proof is complete. 
\end{proof}

Strong tail dependence coefficients for different
copula families are listed, for instance, in \cite{nelsen,heffernan2000directory}. In
particular, it is known that the Gaussian copula has the property of
asymptotic independence \citep{sibuya1959bivariate}. 
By contrast, all copulas of
elliptical distributions with regularly varying tails, including, in
particular, the $t$-copula, are known to have the property of
asymptotic dependence \citep{hult2002multivariate}, and therefore, for
these copulas the weak tail dependence
function equals $1$.

\section{Weak lower tail dependence function for common copula
  families}
\label{taildep.sec}

\paragraph{Gaussian copula}The
Gaussian copula with correlation matrix $R$ is the unique copula of any
Gaussian vector with correlation matrix $R$ and nonconstant components (it does not depend on the
mean vector and on the variances of the components). The following
proposition characterizes the weak lower tail dependence function of
the Gaussian copula. 
\begin{proposition}\label{gausstail.prop}
Let $C$ be an $n$-dimensional Gaussian copula with correlation matrix $R$ with $\det R
\neq 0$. Then,
$$
\chi(\lambda_1,\dots,\lambda_n) = \max_i \lambda_i \min_{\bs w\in \Delta_n}
\bs w^\top \Sigma \bs w,\quad \text{for all $\lambda_1,\dots,\lambda_n>0$,}
$$
where the matrix $\Sigma$ has coefficients $\Sigma_{ij} =
\frac{R_{ij}}{\sqrt{\lambda_i\lambda_j}}$, $1\leq i,j\leq n$. 
\end{proposition}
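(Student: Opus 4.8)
The plan is to work with the log-transformed copula $\widehat C(z_1,\dots,z_n) = C(e^{-z_1},\dots,e^{-z_n})$ and to estimate $\ln C(u^{\lambda_1},\dots,u^{\lambda_n}) = \ln\widehat C(\lambda_1 z,\dots,\lambda_n z)$ where $z = -\ln u \to +\infty$. Since the Gaussian copula is $C(u_1,\dots,u_n) = \Phi_R(\Phi^{-1}(u_1),\dots,\Phi^{-1}(u_n))$, the quantity to control is $\Phi_R(\Phi^{-1}(e^{-\lambda_1 z}),\dots,\Phi^{-1}(e^{-\lambda_n z}))$ as $z\to\infty$. The first step is the classical one-dimensional Mills-ratio asymptotics: $\Phi^{-1}(e^{-\lambda z}) \sim -\sqrt{2\lambda z}$, so that writing $a_i = a_i(z) := -\Phi^{-1}(e^{-\lambda_i z})$, we have $a_i \sim \sqrt{2\lambda_i z}$, and equivalently $\frac12 a_i^2 \sim \lambda_i z$, i.e. $\tfrac{1}{z}(\tfrac12 a_1^2,\dots,\tfrac12 a_n^2) \to (\lambda_1,\dots,\lambda_n)$.

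**The Gaussian lower-orthant estimate.** The core of the argument is a large-deviations-type estimate for $\Pr[N_1 \le -a_1,\dots,N_n \le -a_n]$ where $(N_1,\dots,N_n)\sim \mathcal N(0,R)$ and $a_i\to\infty$ with the ratios fixed. By symmetry this equals $\Pr[N_1 \ge a_1,\dots,N_n\ge a_n]$. I would invoke the known logarithmic asymptotics for Gaussian orthant probabilities: for $R$ invertible,
\[
-\ln \Pr[N_1\ge a_1,\dots,N_n\ge a_n] \sim \tfrac12 \min_{\bs x \ge \bs a} \bs x^\top R^{-1}\bs x
\]
as the $a_i\to\infty$ proportionally, which is a Gaussian version of the contraction principle / Laplace principle — the exponential decay rate is governed by the minimum of the quadratic form of the inverse covariance over the shifted orthant $\{\bs x: x_i \ge a_i\}$. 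The upper bound on this probability follows from a union-type Chernoff argument; the matching lower bound comes from restricting the integral to a small neighborhood of the minimizer. One then needs to evaluate $\min_{\bs x\ge \bs a}\bs x^\top R^{-1}\bs x$. With $a_i = \sqrt{2\lambda_i z}\,(1+o(1))$, a substitution $x_i = \sqrt{\lambda_i}\, y_i$ turns this into $z \cdot \min_{y_i \ge \sqrt{2}(1+o(1))} \bs y^\top \Sigma^{-1}\bs y$ where $\Sigma_{ij} = R_{ij}/\sqrt{\lambda_i\lambda_j}$; pulling out the factor and letting $z\to\infty$,
\[
-\ln C(u^{\lambda_1},\dots,u^{\lambda_n}) \sim z\cdot \tfrac12\min_{\bs y\ge \sqrt 2 \bs 1}\bs y^\top \Sigma^{-1}\bs y.
\]

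**Identifying the constant via duality.** It remains to show $\tfrac12\min_{\bs y\ge\sqrt 2\bs 1}\bs y^\top\Sigma^{-1}\bs y = \big(\min_{\bs w\in\Delta_n}\bs w^\top\Sigma\bs w\big)^{-1}$, since then, dividing by $z = -\ln u = \min_i \ln u^{\lambda_i}/\max_i\lambda_i \cdot(-1)$... more precisely $\min_i \ln u^{\lambda_i} = -z\max_i\lambda_i$, so
\[
\chi(\lambda_1,\dots,\lambda_n) = \lim_{u\to0}\frac{-z\max_i\lambda_i}{-z\cdot\tfrac12\min_{\bs y\ge\sqrt 2\bs 1}\bs y^\top\Sigma^{-1}\bs y} = \max_i\lambda_i\cdot \min_{\bs w\in\Delta_n}\bs w^\top\Sigma\bs w,
\]
as claimed. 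The identity $\min_{\bs y\ge\sqrt 2\bs 1}\tfrac12\bs y^\top\Sigma^{-1}\bs y = (\min_{\bs w\in\Delta_n}\bs w^\top\Sigma\bs w)^{-1}$ is a convex-duality statement: the minimizer $\bs y^*$ of the quadratic form over the orthant $\{\bs y\ge\bs 1\}$ (after rescaling by $\sqrt 2$) has the property that $\Sigma^{-1}\bs y^*$ is a nonnegative multiple of a probability vector $\bs w^*$ supported on the active constraints, and the value $\tfrac12\bs y^{*\top}\Sigma^{-1}\bs y^* $ works out to $1/(2\,\bs w^{*\top}\Sigma\bs w^*)$ after a short Lagrangian computation, with $\bs w^*$ being precisely the minimizer of $\bs w^\top\Sigma\bs w$ over $\Delta_n$.

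**Main obstacle.** The routine parts are the Mills-ratio expansion and the quadratic-programming duality. The delicate step is making the Gaussian orthant log-asymptotics rigorous when the constraint point $\bs a = \bs a(z)$ moves — in particular the lower bound, where one must localize the Gaussian density around the minimizer $\bs x^*(z)$ of the quadratic form on the orthant and show the density does not decay faster than $\exp(-\tfrac12\bs x^{*\top}R^{-1}\bs x^* (1+o(1)))$ on a set of non-negligible (sub-exponential) volume; care is also needed because the minimizer may lie on a lower-dimensional face of the orthant (when some components are "inactive"), so the argument must handle the case where the effective minimization is over a strict subset of indices. I would either cite an existing result on logarithmic asymptotics of Gaussian tail integrals over convex sets (e.g. via Varadhan's lemma applied to the Gaussian measure scaled appropriately) or spell out the two-sided bound directly using the change of variables $\bs x = \sqrt z\,\bs u$ and Laplace's method.
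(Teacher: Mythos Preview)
Your approach is essentially the same as the paper's: both reduce to Gaussian orthant log-asymptotics governed by $\min_{\bs x\geq \bs a}\bs x^\top(\text{Cov})^{-1}\bs x$ and then identify the constant via the Lagrangian duality $\min_{\bs x\geq\bs 1}\bs x^\top\Sigma^{-1}\bs x = 1/\min_{\bs w\in\Delta_n}\bs w^\top\Sigma\bs w$. The paper's packaging is slightly slicker---it builds a Gaussian vector with covariance $\Sigma$ (variances $1/\lambda_i$) rather than correlation $R$, so that all thresholds collapse to a common $z$ and your change of variables $x_i=\sqrt{\lambda_i}\,y_i$ is absorbed up front---and for the orthant estimate it cites Hashorva--H\"usler (2003), whose sharp two-sided bound with polynomial prefactor handles precisely the lower-dimensional-face issue you flagged as the main obstacle.
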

\begin{proof}
Let $\bs X = (X_1,\dots,X_n)$ be a centered Gaussian vector with covariance
matrix $\Sigma$ defined above. The proof is based on the following
lemma. 
\begin{lemma}
Let $\mathbf{X}$ be an $n$-dimensional centered Gaussian vector with covariance matrix
$\Sigma$ assumed to be nondegenerate. 
Then there exist positive constants $c$ and $C$ and an integer $\bar
n$ with $1\leq \bar n\leq n$ such that, for all $z$
sufficiently large,
\begin{align}
\frac{c}{|z|^{\bar n}} e^{-\frac{z^2}{2 \min_{\bs w\in \Delta_n} \bs w^\top \Sigma \bs w}}\leq \Pr[X_1 \geq z,\dots,X_n \geq
z] \leq\frac{C}{|z|^{\bar n}} e^{-\frac{z^2}{2 \min_{\bs w\in \Delta_n} \bs w^\top \Sigma \bs w}}.\label{gaussbound}
\end{align}
\end{lemma}
\begin{proof}
The proof is based on the estimates of multivariate Gaussian tails given in
\cite{hashorva2003multivariate}. Taking $\mathbf{t} = z\bs 1$ and
using the notation introduced in Proposition 2.1 of this reference, we
see that (i) $|I_t|$ does not depend on $z$ and we set $\bar n =
|I_t|$; (ii) for every $i\in I_t$, $h_i = c_i z$ for some constant
$c_i>0$; (iii) the function $R(t)$ defined in \cite[equation
(1.2)]{hashorva2003multivariate} is equivalent to ${1}/{t}$ as
$t\to +\infty$; and finally (iv) the constant $\alpha_t$ satisfies
$$
\alpha_t = \min_{\mathbf x \geq \mathbf t}\bs x^\top \Sigma^{-1} \bs x =
z^2 \min_{\mathbf x \geq \mathbf 1}\bs x^\top \Sigma^{-1} \bs x.
$$
Using the method of Lagrange multipliers, we further get
\begin{align*}
\min_{\mathbf x \geq \mathbf 1}\bs x^\top \Sigma^{-1} \bs x &= \max_{
  {\bs \lambda\geq 0} }\min_{\mathbf x\in \mathbb R^n} \bs x^\top \Sigma^{-1}
\bs x - \bs \lambda^\top (\bs x-\bs 1) = \max_{\bs \lambda\geq 0}
-\frac{1}{4}\bs \lambda^\top \Sigma \bs \lambda + \bs \lambda^\top \bs 1\\ &=
\max_{\rho \geq 0, \bs w \in \Delta_n}  -\frac{\rho^2}{4}\bs w^\top
\Sigma \bs w + \rho = \max_{\bs w\in \Delta_n}\frac{1}{\bs w^\perp
  \Sigma \bs w};
\end{align*}
so that 
$$
\alpha_t = \frac{z^2}{\min_{\bs w\in \Delta_n}\bs w^\perp
  \Sigma \bs w}.
$$
Both upper and lower bounds in \eqref{gaussbound} then follow from formula (3.8)
in \cite{hashorva2003multivariate}. 
\end{proof}
From the above lemma, using the symmetry of centered Gaussian vectors,
we deduce that 
$$
\ln \Pr[X_1 \leq z,\dots,X_n \leq
z]\sim -\frac{z^2}{2 \inf_{\bs w\in \Delta_n} \bs w^\top \Sigma \bs w}
$$
as $z$ tends to $-\infty$. 
Applying this to a
single Gaussian variable yields $$\ln\Pr[X_i\leq z] \sim -\frac{z^2
  \lambda_i}{2},\quad z\to \infty.$$
Now combine these estimates to get, for
$\varepsilon$ and $z$ small enough,
\begin{align*}
  -\frac{z^2(1+\varepsilon)}{2 \inf_{\bs w\in \Delta_n} \bs w^\top \Sigma\bs  w}&\leq \ln \Pr[X_1 \leq z,\dots,X_n \leq
z] = \ln C(\Pr[X_1\leq z],\dots,\Pr[X_n\leq z])\\
&\leq \ln C(e^{-\frac{z^2 \lambda_1 (1-\varepsilon)}{2}},\dots,e^{-\frac{z^2 \lambda_n (1-\varepsilon)}{2}}).
\end{align*}
Letting $u =
e^{-{z^2(1-\varepsilon)}/{2}}$, this leads to
$$
\frac{1+\varepsilon}{(1-\varepsilon)\inf_{\bs w\in \Delta_n}\bs  w^\top \Sigma
    \bs w}\ln u\leq \ln C(u^{\lambda_1},\dots,u^{\lambda_n}).
$$
Dividing by $\min_i \ln u^\lambda$, and using the fact that
$\varepsilon$ is arbitrary, we finally get
$$
{\max_i \lambda_i\inf_{\bs w\in \Delta_n} \bs w^\top \Sigma
    \bs w}\leq \limsup_{u\to 0} \frac {\min_i \ln u^\lambda}{\ln C(u^{\lambda_1},\dots,u^{\lambda_n})}.
$$
The upper bound may be obtained in a similar fashion. 
\end{proof}

\paragraph{Gaussian mixtures with exponentially decaying mixing
  variable} Our next result describes the marginal tail behavior and
the weak lower tail dependence function of Gaussian mean-variance mixtures.
\begin{proposition}\label{mixture}
Let $\bs Y = (Y_1,\dots,Y_n)^\top$ be a centered nondegenerate Gaussian vector with correlation matrix $R$, and let $\bs\mu\in \mathbb R^n$, $\sigma_i =
\sqrt{\mathrm{Var}\, Y_i}$ for $i=1,\dots,n$ and $\tilde \mu_i =
{\mu_i}/{\sigma_i}$ for $i=1,\dots,n$.
Assume that $Z$ is a positive random variable with density
  $\rho(s)$ satisfying
$$
\rho(s) = e^{-\theta s + o(s)},\quad s\to \infty
$$
with $\theta>0$. 
Let $\bs X$ be defined by $\bs X = \sqrt{Z} \bs Y + Z\bs\mu$.  Then
\begin{itemize}
\item For $i=1,\dots,n$, 
$$
\ln \Pr[X_i \leq x]
\sim\frac{2\theta}{\sqrt{2\theta\sigma_i^2 + \mu_i^2} - \mu_i}
x,\quad x\to -\infty. 
$$

\item The copula of $\bs X$ has weak lower tail dependence function
$$
\chi(\lambda_1,\dots,\lambda_n) = \max_i \lambda_i \min_{\bs v}
\Big\{\sqrt{2\theta \bs v^\top R \bs v + (\tilde{\bs\mu}^\top \bs v)^2} -
  \tilde{\bs\mu}^\top \bs v\Big\},
$$
where the minimum is taken over the set
$$
\big\{\bs v\in \mathbb R^n, \ v_i\geq 0,i=1,\dots,n,\ \sum_{i=1}^n v_i
\lambda_i (\sqrt{2\theta  + \tilde\mu_i^2} - \tilde\mu_i)\leq 1\big\}.
$$
\end{itemize} 
\end{proposition}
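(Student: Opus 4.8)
The plan is to mirror the proof of the Gaussian case (Proposition~\ref{gausstail.prop}): establish the marginal log-asymptotics by Laplace's method, rewrite the definition of $\chi$ as a statement about the rate of decay of the joint lower tail $\Pr[X_1\le -\gamma_1 t,\dots,X_n\le -\gamma_n t]$, compute that rate by conditioning on $Z$ and combining the multivariate Gaussian tail estimates of \cite{hashorva2003multivariate} with a further Laplace argument in the mixing variable, and finally reduce the resulting variational expression to the stated closed form by convex duality. For the marginal tails, conditioning on $Z=s$ gives $\Pr[X_i\le x]=\int_0^\infty \Phi\!\big(\tfrac{x-s\mu_i}{\sigma_i\sqrt s}\big)\,\rho(s)\,ds$ with $\Phi$ the standard normal c.d.f.; using $\ln\Phi(-y)\sim -y^2/2$ as $y\to\infty$ and $\ln\rho(s)=-\theta s+o(s)$, a routine Laplace/Watson argument (lower bound from a neighbourhood of the minimiser, upper bound from splitting the range and bounding $\Phi\le 1$) gives $\ln\Pr[X_i\le x]\sim -\inf_{s>0}\big\{\tfrac{(x-s\mu_i)^2}{2\sigma_i^2 s}+\theta s\big\}$ as $x\to-\infty$; the infimum is attained at $s^\ast=|x|/\sqrt{\mu_i^2+2\theta\sigma_i^2}$, and a short computation, using $\sqrt{2\theta\sigma_i^2+\mu_i^2}+\mu_i=2\theta\sigma_i^2/(\sqrt{2\theta\sigma_i^2+\mu_i^2}-\mu_i)$, gives the first assertion.

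For the second assertion, note first that the copula of $\bs X$ is unchanged if each $X_i$ is divided by $\sigma_i$, so we may take $\sigma_i=1$ and $X_i=\sqrt Z\,\tilde Y_i+Z\tilde\mu_i$ with $\tilde{\bs Y}$ a centred Gaussian vector with correlation matrix $R$. Writing $F_i$ for the (continuous) c.d.f. of $X_i$, $t=-\ln u$, and using the copula identity $C\big(F_1(x_1),\dots,F_n(x_n)\big)=\Pr[X_i\le x_i\ \forall i]$ with $x_i=F_i^{-1}(u^{\lambda_i})$, the first part gives $x_i\sim -\gamma_i t$, where $\gamma_i=\lambda_i/(\sqrt{2\theta+\tilde\mu_i^2}+\tilde\mu_i)=\lambda_i(\sqrt{2\theta+\tilde\mu_i^2}-\tilde\mu_i)/(2\theta)$ (indices with $\lambda_i=0$ give vacuous constraints and are dropped). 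Since the decay rate of the joint lower tail is positively homogeneous in the level vector, this reduces the claim to
\[
\chi(\lambda_1,\dots,\lambda_n)=\frac{\max_i\lambda_i}{L},\qquad L:=-\lim_{t\to\infty}\tfrac1t\ln\Pr[X_1\le -\gamma_1 t,\dots,X_n\le -\gamma_n t].
\]
To compute $L$, condition on $Z=s$ and substitute $s=rt$:
\[
\Pr[X_i\le -\gamma_i t\ \forall i]=t\int_0^\infty \Pr\big[\tilde Y_i\le -\sqrt t\,\beta_i(r)\ \forall i\big]\,\rho(rt)\,dr,\qquad \beta_i(r)=\frac{\gamma_i}{\sqrt r}+\sqrt r\,\tilde\mu_i .
\]
By symmetry of $\tilde{\bs Y}$ and the estimates of \cite{hashorva2003multivariate} (the lemma used for Proposition~\ref{gausstail.prop}, but at a non-constant level vector), $\ln\Pr[\tilde Y_i\le -\sqrt t\,\beta_i(r)\ \forall i]\sim -\tfrac t2\,Q(r)$ with $Q(r):=\min_{\bs x\ge \bs\beta(r)}\bs x^\top R^{-1}\bs x$, uniformly on compact $r$-sets; since $Q(r)\to\infty$ as $r\downarrow 0$ and $\theta r\to\infty$ as $r\to\infty$, a final Laplace argument (controlling the $o(rt)=o(t)$ term and the two tails separately) yields
\[
L=\inf_{r>0}\Big\{\theta r+\tfrac12 Q(r)\Big\}=\inf\Big\{\theta r+\tfrac12\bs x^\top R^{-1}\bs x:\ r>0,\ \sqrt r\,x_i\ge\gamma_i+r\tilde\mu_i\ \forall i\Big\}.
\]

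It remains to evaluate this problem in closed form. The substitution $\bs p=\sqrt r\,\bs x-r\tilde{\bs\mu}$ turns the constraints into $\bs p\ge\bs\gamma$ and, after minimising the resulting objective over $r>0$ first (an elementary $\inf_{r>0}(A/r+Br)=2\sqrt{AB}$), gives $L=\inf_{\bs p\ge\bs\gamma}\big\{\tilde{\bs\mu}^\top R^{-1}\bs p+\sqrt{(\tilde{\bs\mu}^\top R^{-1}\tilde{\bs\mu}+2\theta)\,\bs p^\top R^{-1}\bs p}\big\}$. This is a convex minimisation satisfying Slater's condition, so Fenchel duality applies; the conjugate of $\bs p\mapsto\tilde{\bs\mu}^\top R^{-1}\bs p+\sqrt{c\,\bs p^\top R^{-1}\bs p}$, with $c=\tilde{\bs\mu}^\top R^{-1}\tilde{\bs\mu}+2\theta$, is the indicator of $\{\bs\eta:(\bs\eta-R^{-1}\tilde{\bs\mu})^\top R(\bs\eta-R^{-1}\tilde{\bs\mu})\le c\}$, and one obtains $L=\max\{\bs\gamma^\top\bs\eta:\ \bs\eta\ge 0,\ \bs\eta^\top R\bs\eta-2\tilde{\bs\mu}^\top\bs\eta\le 2\theta\}$. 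Finally, with $f(\bs v):=\sqrt{2\theta\bs v^\top R\bs v+(\tilde{\bs\mu}^\top\bs v)^2}-\tilde{\bs\mu}^\top\bs v$ (positively homogeneous, with $\{f\le 1\}=\{\bs v:2\theta\bs v^\top R\bs v-2\tilde{\bs\mu}^\top\bs v\le 1\}$ and $\{\bs\eta:\bs\eta^\top R\bs\eta-2\tilde{\bs\mu}^\top\bs\eta\le 2\theta\}=2\theta\{f\le 1\}$), the gauge/support-function duality $\min\{f(\bs v):\bs v\ge 0,\ \bs c^\top\bs v\ge 1\}=1/\max\{\bs c^\top\bs u:\bs u\ge 0,\ f(\bs u)\le 1\}$, applied with $c_i=\lambda_i(\sqrt{2\theta+\tilde\mu_i^2}-\tilde\mu_i)=2\theta\gamma_i$, turns $\chi=\max_i\lambda_i/L$ into $\max_i\lambda_i\cdot\min_{\bs v}f(\bs v)$ over $\{\bs v\ge 0,\ \sum_i v_i\lambda_i(\sqrt{2\theta+\tilde\mu_i^2}-\tilde\mu_i)\ge 1\}$, which is the stated formula (the displayed constraint should carry ``$\ge$'' rather than ``$\le$'', equivalently the minimum should be a maximum over the ``$\le$'' set, since $f\ge 0$ with $f(\mathbf 0)=0$). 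The main obstacle is precisely this last step: the change of variables together with the convex/gauge duality that recognises the unwieldy variational formula for $L$ as the dual of the advertised minimisation; a secondary technical point is justifying the two limit interchanges, in particular applying the multivariate Gaussian tail estimate at the level vector $\sqrt t\,\bs\beta(r)$, some of whose components may be nonpositive so that only the ``active'' constraints contribute to $Q(r)$, and controlling the $o(s)$ term and the tails in the Laplace argument over the mixing variable.
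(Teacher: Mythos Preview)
Your argument is correct (modulo the Laplace-method technicalities you flag), but it takes a genuinely different route from the paper's. Rather than redoing the conditioning-plus-Gaussian-tail computation from scratch, the paper imports a ready-made estimate (Lemma~\ref{gt}, from \cite{gulisashvili.tankov.14a}) that already delivers $\ln\Pr\big[\sum_i e^{X_i}\le x\big]\sim c^*_\theta\ln x$ together with the variational formula \eqref{E:prev} for $c^*_\theta$; the single-component case of this lemma yields the marginal asymptotics, and then Theorem~\ref{main.thm} (not a direct $\varepsilon$-argument) converts the joint-tail rate into $\chi(\lambda_1,\dots,\lambda_n)=\max_i\lambda_i/c^*_\theta$ for the particular values $\lambda_i=2\theta/(\sqrt{2\theta\sigma_i^2+\mu_i^2}-\mu_i)$. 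To reach arbitrary $\lambda_i$, the paper exploits the copula invariance under the componentwise scaling $(\mu_i,\sigma_i)\mapsto(\alpha_i\mu_i,\alpha_i\sigma_i)$, which merely rescales each $X_i$, and chooses $\alpha_i$ so that the prescribed $\lambda_i$ arise; a final change of variable $v_i=w_i\alpha_i\sigma_i/(2\theta)$ in the formula for $c^*_\theta$ then gives the stated expression directly, with no Fenchel or gauge duality needed. Your self-contained approach dispenses with the external lemma and the scaling trick, at the price of the two Laplace arguments and the duality computation; the paper's approach is shorter but opaque unless one has Lemma~\ref{gt} at hand. Both derivations confirm that the constraint in the displayed feasible set should read $\sum_i v_i\lambda_i(\sqrt{2\theta+\tilde\mu_i^2}-\tilde\mu_i)=1$ (equivalently $\ge 1$), not $\le 1$: with $\le$ the set contains $\bs v=\bs 0$, so the minimum would be $0$, which is incompatible with Corollary~\ref{mixturecor}.
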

Remark that in the general case, the weak lower tail dependence
function of a Gaussian mixture may depend on the correlation matrix
$R$, the normalized mean vector $\tilde{\bs \mu}$ and the decay rate $\theta$, since all these parameters affect the
dependence structure of the random vector. However, in the symmetric
case $(\bs\mu=0)$, it is easy to see that the weak lower tail dependence
function depends only on the correlation matrix.
\begin{corollary}\label{mixturecor}
Let $\bs X = \sqrt{Z} \bs Y$ where $\bs Y$ is centered Gaussian vector with correlation
matrix $R$, assumed to be nondegenerate, and $Z$ satisfies the assumption of Proposition \ref{mixture}. Then,
$$
\chi(\lambda_1,\dots,\lambda_n) = \max_i \lambda_i {\min\limits_{\bs w\in \Delta_n}
  \sqrt{ \bs w^\top \Sigma \bs w } },
$$
where the matrix $\Sigma$ has coefficients $$\Sigma_{ij} =
\frac{R_{ij}}{\lambda_i\lambda_j}.$$ 
\end{corollary}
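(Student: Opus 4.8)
This result is a specialization of Proposition~\ref{mixture} to the symmetric case $\bs\mu = 0$. Setting $\bs\mu = 0$ there gives $\bs X = \sqrt Z\,\bs Y + Z\bs\mu = \sqrt Z\,\bs Y$ and $\tilde\mu_i = \mu_i/\sigma_i = 0$ for every $i$, so the hypotheses of that proposition hold verbatim and its conclusion applies. With $\tilde{\bs\mu} = 0$ the objective $\sqrt{2\theta\,\bs v^\top R\bs v + (\tilde{\bs\mu}^\top\bs v)^2} - \tilde{\bs\mu}^\top\bs v$ reduces to $\sqrt{2\theta\,\bs v^\top R\bs v}$ and each factor $\sqrt{2\theta + \tilde\mu_i^2} - \tilde\mu_i$ entering the constraint reduces to $\sqrt{2\theta}$, so that
\[
\chi(\lambda_1,\dots,\lambda_n) = \max_i\lambda_i\,\min_{\bs v}\sqrt{2\theta\,\bs v^\top R\bs v},
\]
where the minimum runs over $\{\bs v\in\mathbb R^n:\ v_i\ge 0,\ \sqrt{2\theta}\sum_{i}\lambda_i v_i = 1\}$; the constraint is active because the objective is positively $1$-homogeneous and, $R$ being positive definite, vanishes only at the origin, so the reduction to the boundary is the same one carried out in the proof of Proposition~\ref{mixture}.

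The second step is the linear change of variables $w_i := \sqrt{2\theta}\,\lambda_i v_i$. Since $\lambda_i > 0$, this is a bijection between $\{\bs v\ge 0:\ \sqrt{2\theta}\sum_i\lambda_i v_i = 1\}$ and the unit simplex $\Delta_n$, with inverse $v_i = w_i/(\sqrt{2\theta}\,\lambda_i)$, and
\[
2\theta\,\bs v^\top R\bs v = 2\theta\sum_{i,j}\frac{w_i w_j}{2\theta\,\lambda_i\lambda_j}\,R_{ij} = \sum_{i,j}w_i w_j\,\frac{R_{ij}}{\lambda_i\lambda_j} = \bs w^\top\Sigma\bs w,
\]
with $\Sigma_{ij} = R_{ij}/(\lambda_i\lambda_j)$, precisely the matrix in the statement. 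Hence $\min_{\bs v}\sqrt{2\theta\,\bs v^\top R\bs v} = \min_{\bs w\in\Delta_n}\sqrt{\bs w^\top\Sigma\bs w}$, and substituting this back yields $\chi(\lambda_1,\dots,\lambda_n) = \max_i\lambda_i\,\min_{\bs w\in\Delta_n}\sqrt{\bs w^\top\Sigma\bs w}$, as asserted.

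No step presents a real difficulty, everything being elementary algebra downstream of Proposition~\ref{mixture}; the only thing to watch is the cancellation of the $\sqrt{2\theta}$ factors, which is what leaves $\Sigma$ with $\lambda_i\lambda_j$ --- and not $\sqrt{\lambda_i\lambda_j}$, as for the Gaussian copula in Proposition~\ref{gausstail.prop} --- in the denominator, the difference reflecting that the marginal log-tails of a Gaussian mixture with exponentially decaying mixing variable decay linearly, rather than quadratically, in the threshold. As consistency checks, the resulting formula is manifestly order-$0$ homogeneous in $(\lambda_1,\dots,\lambda_n)$, and for $R$ equal to the identity it gives $\chi = \max_i\lambda_i/\sqrt{\sum_j\lambda_j^2}$, which strictly exceeds the independence-copula value $\max_i\lambda_i/\sum_j\lambda_j$ --- as it should, since the common mixing variable $Z$ makes the components of $\bs X$ dependent even when those of $\bs Y$ are not.
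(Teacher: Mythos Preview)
Your approach is exactly the one the paper intends: the corollary is stated there without a separate proof, as an immediate specialization of Proposition~\ref{mixture} to $\bs\mu=0$, and your change of variables $w_i=\sqrt{2\theta}\,\lambda_i v_i$ is precisely the one that turns the feasible set into $\Delta_n$ and the objective into $\sqrt{\bs w^\top\Sigma\bs w}$.

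One sentence needs repair, though. You write that ``the constraint is active because the objective is positively $1$-homogeneous and, $R$ being positive definite, vanishes only at the origin.'' For a \emph{minimization} problem this argument points the wrong way: a nonnegative $1$-homogeneous objective over a set containing the origin attains its minimum at $\bs v=0$, with value $0$, which would give $\chi\equiv 0$. The resolution is not that the boundary is optimal for the $\leq$ problem, but that the ``$\leq 1$'' in the statement of Proposition~\ref{mixture} is a slip: if you trace the change of variables $v_i=w_i\alpha_i\sigma_i/(2\theta)$ in its proof, the image of the simplex $\Delta_n$ is exactly the \emph{equality} set $\sum_i v_i\lambda_i(\sqrt{2\theta+\tilde\mu_i^2}-\tilde\mu_i)=1$. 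So your passage to equality is correct, but the justification should be ``this is what the proof of Proposition~\ref{mixture} actually gives,'' not a homogeneity argument. Everything downstream --- the substitution, the cancellation of $\sqrt{2\theta}$, and the consistency checks --- is fine.
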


\begin{remark}
Proposition \ref{mixture} and Corollary
\ref{mixturecor} 
improve our understanding of the tail dependence of Gaussian mixture
models with exponential decay of the mixing variable. For
example, taking $\bs \mu=0$, we have 
$$
\chi(1,\dots,1) = \min_{\bs w\in \Delta_n} \sqrt{\bs w^\top R \bs w}< 1
$$
whenever the correlation matrix $R$ is nongenenerate. Therefore, by
Proposition \ref{tailcop.prop} we conclude that \emph{Gaussian variance
mixture models with exponentially decaying mixing variable have no
strong tail dependence.} In particular, for $n=2$, 
$$
R = \left(\begin{aligned}&1 && \rho \\ &\rho &&
    1\end{aligned}\right)\quad \text{and}\quad 
\chi(1,1) = \sqrt{\frac{1+\rho}{2}},
$$
and we recover and extend the main result of \cite{schlueter2012weak},
where this value has been computed for the generalized hyperbolic
distribution. More precisely, in this reference, the weak tail dependence
  coefficient is defined (for the left tail) as $$\lim_{u \to 0}
  \frac{2\ln(u)}{\ln C(u,u)}-1,$$ which corresponds to $2\chi(1,1)-1$
  in our notation, and is found to be equal to
  $$2\sqrt{\frac{1+\rho}{2}} - 1.$$ 

\end{remark}
The proof of Proposition \ref{mixture} is based on the following
estimates which can be found in \cite{gulisashvili.tankov.14a}.
\begin{lemma}\label{gt}
Let $\bs Y$ be a centered Gaussian vector with a nondegenerate covariance
matrix $\cm$, and let $\bs \mu \in \mathbb R^n$. Suppose that $Z$ is a random variable with
values in $(0,\infty)$ admitting a density $\rho$.
\begin{itemize}
\item Assume that $\rho(s)\leq
c_1 e^{-\theta s}$ for $s\geq 1$, where $\theta
>0$ and $c_1>0$ are constants. Then, there
exists $C_1> 0$ such that for $k$ sufficiently large,
$$
\Pr\Big[\sum_{i=1}^n e^{Y_i \sqrt{Z} + \mu_i Z } \leq e^{-k}\Big]
\leq C_1 e^{-c_\theta^*k },
$$
where 
\begin{equation}
c_\theta^* = \min_{t\geq 0} \max_{\bs w\in \Delta_n} \Big\{\theta t +
  \frac{(1+t\bs \mu^\top \bs w)^2}{2\bs w^\top \cm \bs w t}\Big\} = \max_{\bs w\in
\Delta_n} \frac{2\theta}{\sqrt{2\theta \bs w^\top \cm \bs w + (\bs \mu^\top \bs w)^2} - \bs \mu^\top
\bs w}.
\label{E:prev}
\end{equation}
\item Assume that 
$\rho(s)\geq
c_2 e^{-\theta s}$ for $s\geq 1$, where $\theta
>0$ and  $c_2>0$ are constants. Then, there
exists $C_2>0$ such that for $k$ sufficiently large,
$$
\Pr\Big[\sum_{i=1}^n e^{Y_i \sqrt{Z} + \mu_i Z } \leq e^{-k}\Big] \geq
C_2 k^{-n}e^{-c_\theta^* k},
$$
\end{itemize}
\end{lemma}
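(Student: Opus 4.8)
The plan is to condition on the value $s$ of the mixing variable $Z$, reduce to the joint lower tail of the Gaussian vector $\bs Y$, and optimize over $s$ by a Laplace-type argument. Writing $W_i:=Y_i\sqrt Z+\mu_i Z$, the event $\{\sum_i e^{W_i}\le e^{-k}\}$ is contained in $\{W_i\le -k\ \forall i\}$ and contains $\{W_i\le -k-\ln n\ \forall i\}$, so up to replacing $k$ by $k+\ln n$ (a constant factor in $e^{-c_\theta^\ast k}$) both assertions reduce to estimating
\[
p(k):=\int_0^\infty \Pr\big[\bs Y\le -\bs b(s)\big]\,\rho(s)\,ds,\qquad b_i(s):=\frac{k}{\sqrt s}+\mu_i\sqrt s .
\]
I would first record the two facts needed about the rate. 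One, the displayed formulas for $c_\theta^\ast$ agree: for fixed $\bs w\in\Delta_n$ the strictly convex map $t\mapsto\theta t+(1+t\bs\mu^\top\bs w)^2/(2t\,\bs w^\top\cm\bs w)$ has minimum $2\theta/\big(\sqrt{2\theta\,\bs w^\top\cm\bs w+(\bs\mu^\top\bs w)^2}-\bs\mu^\top\bs w\big)$ by differentiation and rationalization, and $\min_t$, $\max_{\bs w}$ may be interchanged by a minimax argument on the compact set $\Delta_n$ (nondegeneracy of $\cm$ gives $\bs w^\top\cm\bs w>0$, so $c_\theta^\ast<\infty$ and the maximum is attained). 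Two, combining this with the quadratic‑programming duality $\min_{\bs x\ge \bs b}\bs x^\top\cm^{-1}\bs x=\max_{\bs w\in\Delta_n}(\bs w^\top\bs b)_+^2/(\bs w^\top\cm\bs w)$ and the substitution $s=tk$ gives $\min_{s>0}\big\{\theta s+\tfrac12\min_{\bs x\ge \bs b(s)}\bs x^\top\cm^{-1}\bs x\big\}=c_\theta^\ast k$, with minimizer $s^\ast$ of order $k$; this is the exponential rate governing $p(k)$.

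For the upper bound I would bound $\Pr[\bs Y\le -\bs b(s)]$, uniformly over $s$ comparable to $k$, by the precise multivariate Gaussian tail estimate of \cite{hashorva2003multivariate} (the same tool as in the proof of Proposition \ref{gausstail.prop}): $\Pr[\bs Y\le-\bs b(s)]\le C\,|\bs b(s)|^{-\bar n}\exp\!\big(-\tfrac12\min_{\bs x\ge\bs b(s)}\bs x^\top\cm^{-1}\bs x\big)$ with $1\le\bar n\le n$. Using $\rho(s)\le c_1e^{-\theta s}$ for $s\ge1$, the $s\ge1$ part of $p(k)$ is at most $Cc_1\int_1^\infty|\bs b(s)|^{-\bar n}\exp\!\big(-[\theta s+\tfrac12\min_{\bs x\ge\bs b(s)}\bs x^\top\cm^{-1}\bs x]\big)\,ds$; the bracketed exponent attains its minimum $c_\theta^\ast k$ at $s^\ast\asymp k$ with curvature of order $1/k$, so Laplace's method makes the integral of order $k^{1/2}e^{-c_\theta^\ast k}$ while $|\bs b(s)|^{-\bar n}\asymp k^{-\bar n/2}$; since $\bar n\ge1$ the combined polynomial factor is $k^{(1-\bar n)/2}\le1$ and one obtains $C_1e^{-c_\theta^\ast k}$. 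The region $s<1$ contributes a superexponentially small term (the Gaussian factor is then at most $e^{-k^2/(2\cm_{11})}$), and the region where all $b_i(s)$ are negative — which forces $s$ larger than $c_\theta^\ast k/\theta$ — is handled by $\Pr[\cdot]\le 1$ and $\rho(s)\le c_1e^{-\theta s}$, again giving a term $\le e^{-c_\theta^\ast k}$.

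For the lower bound I would keep only the event $\{W_i\le -k-\ln n\ \forall i\}$ and restrict $Z$ to the interval $[s^\ast,s^\ast+1]$; there $\rho(s)\ge c_2e^{-\theta s}\ge c_2 e^{-\theta(s^\ast+1)}$, and the matching lower estimate from \cite{hashorva2003multivariate} gives $\Pr[\bs Y\le-\bs b(s)]\ge c\,|\bs b(s)|^{-\bar n}\exp\!\big(-\tfrac12\min_{\bs x\ge\bs b(s)}\bs x^\top\cm^{-1}\bs x\big)$. Since $|\bs b(s)|\asymp\sqrt k$ this yields a factor $k^{-\bar n/2}\ge k^{-n}$, and on the unit interval $[s^\ast,s^\ast+1]$ the exponent $\theta s+\tfrac12\min_{\bs x\ge\bs b(s)}\bs x^\top\cm^{-1}\bs x$ stays within $O(1/k)$ of its minimum $c_\theta^\ast k$ (its derivative vanishes and its second derivative is $O(1/k)$ at $s^\ast$), so $p(k)\ge C_2 k^{-n}e^{-c_\theta^\ast k}$.

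The step I expect to be the main obstacle is the coupling of the optimization over the mixing level $s$ with the optimization hidden in $\min_{\bs x\ge\bs b(s)}\bs x^\top\cm^{-1}\bs x$ — that is, proving the minimax identity for $c_\theta^\ast$ and controlling how the active set of constraints (equivalently the integer $\bar n$) changes with $s$ and with the sign pattern of $\bs\mu$, so that the multivariate Gaussian tail estimates can be applied uniformly in $s$ along the curve $s\mapsto\bs b(s)$. Everything else — the one‑variable minimizations, the Gaussian tail inequality $\Phi(-x)\le e^{-x^2/2}$, and the Laplace asymptotics — is routine.
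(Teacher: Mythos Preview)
The paper does not actually prove this lemma: the sentence introducing it reads ``The proof of Proposition~\ref{mixture} is based on the following estimates which can be found in \cite{gulisashvili.tankov.14a}'', and no argument is supplied. So there is no in-paper proof to compare yours against. Your plan --- condition on $Z=s$, sandwich the sum event between the orthant events $\{W_i\le -k\ \forall i\}$ and $\{W_i\le -k-\ln n\ \forall i\}$, apply the multivariate Gaussian tail bounds of \cite{hashorva2003multivariate} to reduce to the quadratic programme $\alpha(s)=\min_{\bs x\ge\bs b(s)}\bs x^\top\cm^{-1}\bs x$, dualize to obtain the $\min_t\max_{\bs w}$ expression for $c_\theta^\ast$, and optimize over $s$ --- is the natural route and is almost certainly what the cited reference does.

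Two remarks on the details. First, you are right that the crude Chernoff bound $\Pr[\bs Y\le-\bs b]\le e^{-\alpha/2}$ alone is not enough for the upper bound: since the curvature of $s\mapsto\theta s+\alpha(s)/2$ at $s^\ast$ is of order $1/k$, the Laplace integral has width $\asymp\sqrt k$, and one genuinely needs at least one polynomial factor $k^{-1/2}$ from the sharper Gaussian tail to absorb it. Second, the uniformity issue you flag can be handled for the upper bound without tracking the active set: for each $s$ project onto the (possibly $s$-dependent) maximizer $\bs w^\ast=\bs w^\ast(s)\in\Delta_n$ of the dual problem, so that
\[
\Pr[\bs Y\le-\bs b(s)]\le \Phi\!\left(-\frac{\bs w^{\ast\top}\bs b(s)}{\sqrt{\bs w^{\ast\top}\cm\,\bs w^\ast}}\right)
\le \frac{\sqrt{\bs w^{\ast\top}\cm\,\bs w^\ast}}{\bs w^{\ast\top}\bs b(s)\sqrt{2\pi}}\,e^{-\alpha(s)/2},
\]
which already carries the correct exponent and a prefactor that is uniformly $\asymp k^{-1/2}$ on the main window (because $(\bs w^{\ast\top}\bs b(s))^2=\alpha(s)\,\bs w^{\ast\top}\cm\,\bs w^\ast$ with $\alpha(s)\asymp k$ and $\bs w^{\ast\top}\cm\,\bs w^\ast\in[\lambda_{\min}(\cm),\lambda_{\max}(\cm)]$). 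Your treatment of the regions $s<1$ and $\{\max_i b_i(s)\le 0\}$ is correct; in particular the inequality $c_\theta^\ast\le\theta/\min_i|\mu_i|$ (when all $\mu_i<0$) that you implicitly use follows from $\sqrt{2\theta\bs w^\top\cm\bs w+(\bs\mu^\top\bs w)^2}+|\bs\mu^\top\bs w|\ge 2|\bs\mu^\top\bs w|\ge 2\min_i|\mu_i|$. For the lower bound, freezing $s$ in $[s^\ast,s^\ast+1]$ fixes the threshold direction and hence the active set, so no uniformity issue arises.
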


\begin{proof}[Proof of Proposition \ref{mixture}]
Under the assumptions of Proposition \ref{mixture}, for every
$\varepsilon>0$, one can find constants $c_1>0$ and $c_2>0$ such that 
$$
c_1 e^{-(\theta+\varepsilon) s}\leq \rho(s) \leq c_2
e^{-(\theta-\varepsilon) s}, \quad s\geq 1.
$$
Using the bounds of Lemma \ref{gt} and taking the logarithm yields,
for $x$ small enough,
$$
\ln C_2 - n\ln \ln \frac{1}{x} +c^*_{\theta+\varepsilon} \ln x\leq \ln \Pr\Big[\sum_{i=1}^n e^{X_i } \leq x\Big] \leq
\ln C_1 +c^*_{\theta-\varepsilon} \ln x.
$$
Divide by $\ln x$ and pass to the limit $x\to 0$ to get
\begin{align*}
&c^*_{\theta+\varepsilon} \geq \lim\sup_{x\to 0} \frac{\ln \Pr\big[\sum_{i=1}^n e^{X_i } \leq x\big]}{\ln x} \\ &\lim\inf_{x\to 0} \frac{\ln \Pr\big[\sum_{i=1}^n e^{X_i } \leq x\big]}{\ln x} \geq
c^*_{\theta-\varepsilon}.
\end{align*}
Since $c^*_\theta$ is obviously continuous in $\theta$ and
$\varepsilon$ is arbitrary, we conclude
that 
$$
\lim_{x\to 0} \frac{\ln \Pr\big[\sum_{i=1}^n e^{X_i } \leq x\big]}{\ln x} = c^*_\theta.
$$
Applying this result to a single component $X_i$, we get
$$
\lim_{x\to 0} \frac{\ln \Pr[e^{X_i } \leq x]}{\ln x} = \frac{2\theta}{\sqrt{2\theta\sigma_i^2 + \mu_i^2} - \mu_i}.
$$
Therefore, $\ln \Pr[e^{X_i } \leq x]$ is slowly varying as $x$
tends to $0$, and by
Theorem \ref{main.thm}, 
$$
\chi(\lambda_1,\dots,\lambda_n) = \frac{\max_i
  \lambda_i}{c^*_\theta}\quad \text{for}\quad \lambda_i = \frac{2\theta}{\sqrt{2\theta\sigma_i^2 + \mu_i^2} - \mu_i}.
$$
However, since $\chi$ depends only on the copula, it is invariant with
respect to the transformation $\mu_i \mapsto \alpha_i\mu_i$ and
$\sigma_i\mapsto \alpha_i\sigma_i$ for $i=1,\dots,n$ for any vector
$\bs \alpha\in \mathbb R^n$ with positive components. Hence, for arbitrary
$\lambda_i>0$, one can always find $\alpha_i>0$ such that 
$$
\lambda_i = \frac{2\theta}{\sqrt{2\theta (\alpha_i\sigma_i)^2 +
    (\alpha_i\mu_i)^2} - \alpha_i\mu_i}.
$$
To complete the proof, substitute this into the expression for
$c^*_\theta$ and make the change of variable $$v_i = \frac{w_i
  \alpha_i \sigma_i}{2\theta}$$ in the optimization problem. 
\end{proof}

\paragraph{Archimedean copulas} Recall that given a function $\phi: [0,1]\to [0,\infty]$
which is continuous, strictly decreasing and such that its inverse
$\phi^{-1}$ is completely monotonic, the Archimedean copula with generator $\phi$ is
defined by
$$
C(u_1,\dots,u_n) = \phi^{-1}\{\phi(u_1)+\dots+\phi(u_n)\}. 
$$
The following simple result gives the weak lower tail dependence
function for an Archimedean copula. The case when $\ln \phi^{-1}$ is
regularly varying includes for example the Gumbel copula with
$\phi^{-1}(t) = \exp(-t^{1/\theta})$ and several other families. 
\begin{proposition}
Let $C$ be an Archimedean copula with generator function $\phi$. 
\begin{itemize} 
\item[(i).] If $\ln \phi^{-1}$ is regularly varying at $+\infty$ with index
$\alpha>0$, then,
$$
\chi(\lambda_1,\dots,\lambda_n) = \frac{\max(\lambda_1,\dots,\lambda_n)}{\big(\lambda_1^{1/\alpha}+\dots + \lambda_n^{1/\alpha}\big)^\alpha}
$$
\item[(ii).] If $\ln \phi^{-1}$ is slowly varying at $+\infty$, then
$$
\chi(\lambda_1,\dots,\lambda_n) = 1
$$
\end{itemize}
\end{proposition}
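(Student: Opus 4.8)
The plan is to reduce $\chi$ to a one-dimensional asymptotic problem for compositions of regularly varying functions. First I would set $s=-\ln u\to+\infty$ and introduce $g(s)=\phi(e^{-s})$ and $L(t)=-\ln\phi^{-1}(t)$. Since in either case $\ln\phi^{-1}$ is required to be real-valued near $+\infty$, the generator must be strict ($\phi(0)=\infty$), so $\phi^{-1}$ is positive and strictly decreasing on $[0,\infty)$ and hence $L$ is a strictly increasing bijection of $[0,\infty)$ with inverse $g=L^{-1}$, and $\phi(1)=0$ gives $g(0)=0$. Using $\min_i\ln u^{\lambda_i}=(\ln u)\max_i\lambda_i$ and $\ln C(u^{\lambda_1},\dots,u^{\lambda_n})=-L\big(\sum_{i=1}^n g(\lambda_i s)\big)$, formula \eqref{defchi} becomes
$$
\chi(\lambda_1,\dots,\lambda_n)=\max_i\lambda_i\cdot\lim_{s\to\infty}\frac{s}{L\big(\sum_{i=1}^n g(\lambda_i s)\big)},
$$
so everything reduces to the growth of $L\big(\sum_i g(\lambda_i s)\big)$ (indices with $\lambda_i=0$ drop out because $g(0)=0$).

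For part (i), $L=-\ln\phi^{-1}$ is regularly varying at $+\infty$ with index $\alpha>0$, so by the inversion theorem for regularly varying functions $g=L^{-1}$ is regularly varying with index $1/\alpha$. By the definition of regular variation, $g(\lambda_i s)\sim\lambda_i^{1/\alpha}g(s)$ for each $i$, hence $\sum_i g(\lambda_i s)\sim c\,g(s)$ with $c:=\sum_i\lambda_i^{1/\alpha}$. Feeding this into $L$, using the uniform convergence theorem for $L$ (to handle $L(c(1+o(1))g(s))$ for arguments staying in a compact neighbourhood of $c$) and the identity $L(g(s))=s$, I obtain $L\big(\sum_i g(\lambda_i s)\big)\sim c^\alpha s$, and therefore $\chi=\max_i\lambda_i/c^\alpha=\max_i\lambda_i/\big(\sum_i\lambda_i^{1/\alpha}\big)^\alpha$.

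For part (ii), $L$ is slowly varying and increases to $+\infty$, so $g$ is rapidly varying; here I do not need sharp asymptotics of the sum, only the crude sandwich $g(\lambda_{\max}s)\le\sum_i g(\lambda_i s)\le n\,g(\lambda_{\max}s)$ with $\lambda_{\max}=\max_i\lambda_i$. Applying the increasing function $L$ and absorbing the factor $n$ by slow variation of $L$ gives $L\big(\sum_i g(\lambda_i s)\big)\sim L(g(\lambda_{\max}s))=\lambda_{\max}s$, hence $\chi=\max_i\lambda_i/\lambda_{\max}=1$. Formally this is the $\alpha\downarrow0$ degeneration of (i), since $\big(\sum_i\lambda_i^{1/\alpha}\big)^\alpha\to\max_i\lambda_i$.

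The main obstacle is the care needed to push the asymptotic equivalences through the composition with $L$: one must invoke Potter's bounds / the uniform convergence theorem for regularly varying functions to justify passing $L$ through the $(1+o(1))$ factor in (i) and through the constant $n$ in (ii), and — a point easy to overlook — one should first verify that the hypotheses on $\ln\phi^{-1}$ genuinely force $g$ and $L$ to be mutually inverse increasing bijections of $[0,\infty)$, without which $C(u,\dots,u)$ could vanish for small $u$ and the limit defining $\chi$ would be trivial.
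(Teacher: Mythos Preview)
Your proof is correct and takes essentially the same approach as the paper: both invoke the inversion theorem for regularly varying functions to deduce that $s\mapsto\phi(e^{-s})$ has index $1/\alpha$, sum to get the factor $\sum_i\lambda_i^{1/\alpha}$, and then apply the regular variation of $\ln\phi^{-1}$ to the result. You are more explicit than the paper on points it elides (strictness of the generator, the uniform-convergence step needed to push $L$ through the $(1+o(1))$ factor, and the sandwich argument for case~(ii), which the paper dismisses as ``similar''), but the core argument is identical.
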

\begin{remark}
The condition that $\ln \phi^{-1}$ be regularly varying at $0$ is
sufficient for $C$ to be in the max-domain of attraction of the Gumbel
copula \citep{genest1989characterization}. However, for the
existence of the weak lower tail dependence function we require that
$\ln \phi^{-1}$ be regularly varying at $+\infty$ which is a
different condition. 
\end{remark}
\begin{remark}
When $\ln \phi^{-1}$ is regularly varying but not slowly varying at $+\infty$, Proposition \ref{tailcop.prop}
 implies that the copula $C$ has no strong dependence in the left
 tail, meaning that the strong tail dependence coefficient $\lambda_L$
 equals zero. When $\ln \phi^{-1}$ is slowly varying, the situation
 is less clear. For an Archimedean copula, the strong tail dependence
 coefficient is given by
$$
\lambda_L = \lim_{u\downarrow 0} \frac{C(u,\dots,u)}{u} =
\lim_{u\downarrow 0} \frac{\phi^{-1}\{\phi(u_1)+\dots+\phi(u_n)\}}{u} =
\lim_{t\to \infty} \frac{\phi^{-1}(nt)}{\phi^{-1}(t)}. 
$$
Therefore, when $\phi^{-1}$ is slowly or regularly varying at
$+\infty$, $\lambda_L$ exists and is strictly positive, and so $\chi$
attains its upper bound
$\chi(\lambda_1,\dots,\lambda_n)=1$ for all
$\lambda_1,\dots,\lambda_n\geq 0$. However, there exist situations
when $\lambda_L=0$ yet  $\chi(\lambda_1,\dots,\lambda_n)=1$. Indeed,
the function $$\phi^{-1}(u) = e^{-\left\{\ln(1+u) +
    \frac{1}{2}\right\}^2+\frac{1}{4}}$$ is a valid inverse generator
function of an Archimedean copula in dimension $2$ and is rapidly varying at $+\infty$
(which means that $\lambda_L=0$) but $\ln \phi^{-1}$ is slowly
varying. 
\end{remark}
\begin{proof}
Assume first that $\ln \phi^{-1}$ is regularly varying with index $\alpha>0$. By definition of $\chi$,
\begin{align*}
\chi(\lambda_1,\dots,\lambda_n) &= \lim_{u\to 0}
\frac{\max(\lambda_1,\dots,\lambda_n)\ln u}{\ln
  \phi^{-1}\{\phi(u^{\lambda_1})+\dots+\phi(u^{\lambda_n})\}}\\
& = \lim_{u\to 0}
\frac{\max(\lambda_1,\dots,\lambda_n)\ln \phi^{-1}(\phi(u))}{\ln
  \phi^{-1}\{\phi(e^{\lambda_1\ln u})+\dots+\phi(e^{\lambda_n\ln
    u})\}}
\end{align*}
By the inversion theorem for regularly varying functions
\citep{bingham1989regular}, the function $u\mapsto\phi(e^u)$ is
regularly varying at $-\infty$ with index
${1}/{\alpha}$. Therefore, for any $\varepsilon>0$ and $u$
sufficiently small,
\begin{align*}
(1-\varepsilon) (\lambda_1^{1/\alpha}+\dots +
\lambda_n^{1/\alpha})\phi(u)&\leq \phi(e^{\lambda_1\ln
  u})+\dots+\phi(e^{\lambda_n\ln u}) \\ &\leq (1+\varepsilon) (\lambda_1^{1/\alpha}+\dots + \lambda_n^{1/\alpha})\phi(u),
\end{align*}
and we conclude using the regular variation of $\ln \phi^{-1}$ and
the fact that $\varepsilon$ is arbitrary. The proof for the case when
$\ln \phi^{-1}$ is slowly varying is similar.

\end{proof}

\paragraph{Extreme value copulas}
The weak lower tail dependence function can be alternatively
represented as follows.
\begin{align}
\chi(\lambda_1,\dots,\lambda_n) = -\frac{\max_i \lambda_i}{\ln
  \lim_{t\to \infty} C\{(e^{-\lambda_1})^{t},\dots,(e^{-\lambda_n})^{t}\}^{\frac{1}{t}}}.\label{ev.eq}
\end{align}
Let $C$ be an extreme value copula 
\cite[chapter 6]{de2007extreme}, that is, a copula
satisfying 
$$
C(u_1^{1/m},\dots,u_n^{1/m})^m = C(u_1,\dots,u_n),\quad m\in \mathbb N^*,\quad
(u_1,\dots,u_n)\in [0,1]^n,
$$
where $\mathbb N^*$ denotes the set of natural numbers excluding
zero. 
From \eqref{ev.eq} it follows that the weak lower tail dependence
function of $C$ is given simply by
$$
\chi(\lambda_1,\dots,\lambda_n) = -\frac{\max_i \lambda_i}{\ln
 C(e^{-\lambda_1},\dots,e^{-\lambda_n})}.
$$

\section{Tail asymptotics of weakly dependent random vectors}
\label{tailsum.sec}
In this section we show how the weak tail dependence function may be
used to characterize the log-scale tail behavior of certain functionals of
components of weakly dependent random vectors.

Our first example shows that under relatively weak assumptions on the
margins, the log-scale asymptotic behavior of the tails of the distribution function of a weakly dependent
random vector may be deduced from the weak tail dependence function.
\begin{theorem}\label{main.thm}${}$\begin{itemize}\item[(i)]
Let $X_1,\dots,X_n$ be random variables with values in $(a,b)$, where
$b \in \mathbb R \cup \{+\infty\}$ with marginal survival functions $\bar F_1,\dots,\bar F_n$ and
survival copula
$\overline C$ satisfying the following assumptions.
\begin{itemize}
\item For each $k=1,\dots,n$,  $$
\ln \bar F_k(x) \sim \lambda_k \ln \bar F_0(x)\quad \text{as $x \uparrow b$}
$$
for some constants $\lambda_k >0 $ and some function $\bar F_0$. 
\item The copula $\overline C$ admits a weak upper tail dependence function
  $\bar\chi$. 
\end{itemize}
Then,
$$
\lim_{x\uparrow b} \frac{\ln \Pr[\min(X_1,\dots,X_n) \geq x]}{\min_i\ln \Pr[X_i
  \geq x]} =
\frac{1}{\bar\chi(\lambda_1,\dots,\lambda_n)}. 
$$
\item[(ii)] Let $X_1,\dots,X_n$ be random variables with values in $(a,b)$, where
$a \in \mathbb R \cup \{-\infty\}$ with marginal distribution functions $F_1,\dots,F_n$ and
copula
$C$ satisfying the following assumptions.
\begin{itemize}
\item For each $k=1,\dots,n$,  $$
\ln F_k(x) \sim \lambda_k \ln \bar F_0(x)\quad \text{as $x \downarrow a$}
$$
for some constants $\lambda_k >0 $ and some function $\bar F_0$. 
\item The copula $C$ admits a weak lower tail dependence function
  $\chi$. 
\end{itemize}
Then,
$$
\lim_{x\downarrow a} \frac{\ln \Pr[\max(X_1,\dots,X_n) \leq x]}{\min_i\ln \Pr[X_i
  \leq x]} =
\frac{1}{\chi(\lambda_1,\dots,\lambda_n)}. 
$$
\end{itemize}
\end{theorem}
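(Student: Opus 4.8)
First, part (i) is obtained from part (ii) by symmetry: apply (ii) to the vector $(-X_1,\dots,-X_n)$, whose marginal distribution functions coincide (on the relevant scale) with $\bar F_1(-\,\cdot\,),\dots,\bar F_n(-\,\cdot\,)$ and whose copula is the survival copula $\overline C$, and note that $x\uparrow b$ becomes $-x\downarrow -b$ while the event $\{\min(X_1,\dots,X_n)\ge x\}$ becomes $\{\max(-X_1,\dots,-X_n)\le -x\}$ and $\bar\chi$ is the weak lower tail dependence function of $\overline C$. So it suffices to prove (ii).

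The starting point is the identity $\Pr[\max(X_1,\dots,X_n)\le x]=C(F_1(x),\dots,F_n(x))$ together with $\min_i\ln\Pr[X_i\le x]=\min_i\ln F_i(x)$. Since the $X_k$ take values in $(a,b)$ we have $F_k(x)\to 0$ as $x\downarrow a$, hence $\ln F_k(x)\to-\infty$; combined with the hypothesis $\ln F_k(x)\sim\lambda_k\ln \bar F_0(x)$ and $\lambda_k>0$ this forces $\ln\bar F_0(x)\to-\infty$, so that $w(x):=\bar F_0(x)\in(0,1)$ for $x$ close enough to $a$. Moreover a minimum of finitely many negative quantities each equivalent to $\lambda_i\ln w(x)$ is equivalent to $(\max_i\lambda_i)\ln w(x)$, so $\min_i\ln F_i(x)\sim(\max_i\lambda_i)\ln w(x)$.

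Now fix $\varepsilon\in(0,1)$. The log-equivalence of the margins gives a neighbourhood of $a$ on which, for every $k$,
\[
w(x)^{(1+\varepsilon)\lambda_k}\ \le\ F_k(x)\ \le\ w(x)^{(1-\varepsilon)\lambda_k}.
\]
Since $C$ is nondecreasing in every argument, writing $w=w(x)$,
\[
\ln C\bigl((w^{1+\varepsilon})^{\lambda_1},\dots,(w^{1+\varepsilon})^{\lambda_n}\bigr)\ \le\ \ln\Pr[\max(X_1,\dots,X_n)\le x]\ \le\ \ln C\bigl((w^{1-\varepsilon})^{\lambda_1},\dots,(w^{1-\varepsilon})^{\lambda_n}\bigr),
\]
all three terms being $\le 0$ (possibly $-\infty$). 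I would then divide through by $\min_i\ln F_i(x)<0$ (which reverses the inequalities), note that $u:=w(x)^{1\pm\varepsilon}\to 0$ as $x\downarrow a$, and invoke the definition of $\chi$, which after rearrangement reads $\ln C(u^{\lambda_1},\dots,u^{\lambda_n})\sim(\max_i\lambda_i)(\ln u)\big/\chi(\lambda_1,\dots,\lambda_n)$ as $u\to 0$. Substituting $u=w^{1\pm\varepsilon}$ and using $\min_i\ln F_i(x)\sim(\max_i\lambda_i)\ln w(x)$, the two ends of the sandwich converge to $(1-\varepsilon)/\chi(\lambda_1,\dots,\lambda_n)$ and $(1+\varepsilon)/\chi(\lambda_1,\dots,\lambda_n)$, whence
\[
\frac{1-\varepsilon}{\chi(\lambda_1,\dots,\lambda_n)}\ \le\ \liminf_{x\downarrow a}\frac{\ln\Pr[\max(X_1,\dots,X_n)\le x]}{\min_i\ln F_i(x)}\ \le\ \limsup_{x\downarrow a}\frac{\ln\Pr[\max(X_1,\dots,X_n)\le x]}{\min_i\ln F_i(x)}\ \le\ \frac{1+\varepsilon}{\chi(\lambda_1,\dots,\lambda_n)}.
\]
Letting $\varepsilon\downarrow 0$ gives the claim, with the conventions $\ln 0=\infty$, $1/\infty=0$ absorbing the degenerate case $\chi=0$ (where the ratio tends to $+\infty$).

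The one genuinely delicate point — and the reason for introducing $\varepsilon$ — is that the weak tail dependence function is defined only as a pointwise limit in $u$ for a \emph{fixed} exponent vector, whereas along the path $x\downarrow a$ the effective exponents $\ln F_k(x)/\ln w(x)$ merely tend to $\lambda_k$ rather than equal them; monotonicity of $C$ is precisely what lets us replace the approximate exponents by exact ones at the cost of an $\varepsilon$, after which only the $\chi$-limit along the sequences $w(x)^{1\pm\varepsilon}\to 0$ is needed. The rest is routine sign bookkeeping (every logarithm in play is negative, so each division flips the inequality) and a check that the conventions in the definition of $\chi$ cover the boundary values $\chi\in\{0,1\}$.
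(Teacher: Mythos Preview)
Your proof is correct and follows essentially the same route as the paper's: bound $F_k(x)$ between $w(x)^{(1\pm\varepsilon)\lambda_k}$ using the log-equivalence hypothesis, apply monotonicity of the copula to sandwich $\ln C(F_1(x),\dots,F_n(x))$, invoke the definition of $\chi$ along $u=w(x)^{1\pm\varepsilon}\to 0$, and let $\varepsilon\downarrow 0$. The only cosmetic difference is that the paper proves part (i) and declares (ii) analogous, whereas you prove (ii) and reduce (i) to it by the reflection $X_i\mapsto -X_i$; your treatment is also slightly more explicit about why $\bar F_0(x)\to 0$ and about the sign reversals when dividing by a negative quantity.
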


\begin{proof}We prove only the first part, the proof of the second
  part being very similar.
First, observe that 
\begin{align*}
\Pr[\min(X_1,\dots,X_n) \geq x] =    \Pr[X_1\geq x,\dots,X_n \geq x] = \overline C\{\bar
  F_1(x),\dots,\bar F_n(x)\}. 
\end{align*}
By assumption of the theorem, for any $\varepsilon>0$ and $x$ close
enough to $b$, 
$$
\bar F_0(x)^{\lambda_k (1+\varepsilon)}  \leq \bar F_k(x)
\leq \bar F_0(x)^{\lambda_k (1-\varepsilon)},\quad k=1,\dots,n.
$$
Therefore, 
\begin{align*}
\overline C\{\bar F_0(x)^{\lambda_1
  (1+\varepsilon)},\dots,\bar F_0(x)^{\lambda_n (1+\varepsilon)}\}&\leq
\Pr[\min(X_1,\dots,X_n) \geq x] \\ &\leq \overline C\{\bar F_0(x)^{\lambda_1
  (1-\varepsilon)},\dots,\bar F_0(x)^{\lambda_n (1-\varepsilon)}\}
\end{align*}
and by definition of the weak lower tail dependence function, for $x$
close enough to $b$ enough, we then
have
\begin{align*}
\bar F_0(x)^{\bar\chi^{-1}(\lambda_1,\dots,\lambda_n)(1+\varepsilon)^2
  \max_i \lambda_i}&\leq \Pr[\min(X_1,\dots,X_n) \geq x] \\ &\leq
  \bar F_0(x)^{\bar\chi^{-1}(\lambda_1,\dots,\lambda_n)(1-\varepsilon)^2 \max_i \lambda_i}.
\end{align*}
Taking the logarithms and using the fact that $\varepsilon$ is
arbitrary shows that
$$
\lim_{x\uparrow b}\frac{\ln \Pr[\min(X_1,\dots,X_n) \geq
  x]}{\max_i \lambda_i\ln \bar F_0(x)} = \bar\chi^{-1}(\lambda_1,\dots,\lambda_n)
$$
and therefore
$$
\lim_{x\uparrow b}\frac{\ln \Pr[\min(X_1,\dots,X_n) \geq x]}{\ln \min_i \Pr[X_i \geq x]} =
{\bar\chi^{-1}(\lambda_1,\dots,\lambda_n)}.
$$
\end{proof}

Under the assumption of slow variation on the log scale of the
marginal distribution functions, the same asymptotic behavior extends
to more complex functionals of the random vector. 
\begin{corollary}\label{main2.thm} ${}$
\begin{itemize}
\item Let $A \subset [0,\infty)^n$ be a measurable set such that there exist $0<k<K<\infty$ with
  $[K,\infty)^n \subset A \subset [k,\infty)^n$ and let $X_1,\dots,X_n$ be random variables with values in $(0,\infty)$
with marginal survival functions $\overline F_1,\dots,\overline F_n$
and survival copula
$\overline C$ satisfying the following assumptions.
\begin{itemize}
\item For each $k=1,\dots,n$,  $\ln \overline F_k$ is slowly varying at
$+\infty$ and satisfies 
$$
\ln \overline F_k(x) \sim \lambda_k \ln  \overline F_0(x)\quad
\text{as $x\uparrow +\infty$}
$$
for some constant $\lambda_k >0 $ and some function $\overline F_0$. 
\item The copula $\overline C$ admits a weak upper tail dependence function
  $\bar\chi$. 
\end{itemize}
Then,
$$
\lim_{x\uparrow +\infty} \frac{\ln \Pr[(X_1,\dots,X_n) \in x A]}{\min_i\ln \Pr[X_i
  \geq x]} =
\frac{1}{\chi(\lambda_1,\dots,\lambda_n)}. 
$$
\item Let $A \subset [0,\infty)^n$ be a
  bounded measurable set such that there exist $0<k<K<\infty$ with
  $[0,k]^n \subset A \subset [0,K]^n$ and let $X_1,\dots,X_n$ be random variables with values in $(0,\infty)$
with marginal distribution functions $F_1,\dots,F_n$ and copula
$C$ satisfying the following assumptions.
\begin{itemize}
\item For each $k=1,\dots,n$,  $\ln F_k$ is slowly varying at
zero and satisfies 
$$
\ln F_k(x) \sim \lambda_k \ln F_0(x)\quad \text{as $x\downarrow 0$}
$$
for some constant $\lambda_k >0 $ and some function $F_0$. 
\item The copula $C$ admits a weak lower tail dependence function
  $\chi$. 
\end{itemize}
Then,
$$
\lim_{x\downarrow 0} \frac{\ln \Pr[(X_1,\dots,X_n) \in x A]}{\min_i\ln \Pr[X_i
  \leq x]} =
\frac{1}{\chi(\lambda_1,\dots,\lambda_n)}. 
$$
\end{itemize}
\end{corollary}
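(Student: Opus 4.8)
The plan is to deduce both statements from Theorem~\ref{main.thm} by sandwiching the event $\{(X_1,\dots,X_n)\in xA\}$ between two orthant events, the only new ingredient being the slow-variation hypothesis, which is needed to absorb the multiplicative constants coming from the inclusions $[K,\infty)^n\subset A\subset[k,\infty)^n$ (resp.\ $[0,k]^n\subset A\subset[0,K]^n$) into the argument of the marginal log-survival (resp.\ log-distribution) functions.

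For the first part, the inclusions give, for every $x>0$,
$$\Pr[\min(X_1,\dots,X_n)\geq Kx]\;\leq\;\Pr[(X_1,\dots,X_n)\in xA]\;\leq\;\Pr[\min(X_1,\dots,X_n)\geq kx].$$
Since $cx\uparrow+\infty$ as $x\to\infty$ for any fixed $c>0$, the conclusion of Theorem~\ref{main.thm}(i) (with $b=+\infty$), applied with $x$ replaced by $cx$, yields
$$\ln\Pr[\min(X_1,\dots,X_n)\geq cx]\;\sim\;\frac{\min_i\ln\overline F_i(cx)}{\bar\chi(\lambda_1,\dots,\lambda_n)},\qquad x\to\infty,$$
for $c\in\{k,K\}$. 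I would then remove $c$ from the denominator: because $\ln\overline F_k$ is slowly varying at $+\infty$ and $\ln\overline F_k\sim\lambda_k\ln\overline F_0$, the function $\ln\overline F_0$ is slowly varying as well, and, arguing exactly as in the proof of Theorem~\ref{main.thm} (the minimum being governed by the largest $\lambda_i$ since $\ln\overline F_0<0$ eventually), $\min_i\ln\overline F_i(y)\sim(\max_i\lambda_i)\ln\overline F_0(y)$ as $y\to\infty$. Hence, for fixed $c>0$,
$$\min_i\ln\overline F_i(cx)\;\sim\;(\max_i\lambda_i)\ln\overline F_0(cx)\;\sim\;(\max_i\lambda_i)\ln\overline F_0(x)\;\sim\;\min_i\ln\Pr[X_i\geq x].$$
Combining the last three displays, both $\ln\Pr[\min(X_1,\dots,X_n)\geq Kx]$ and $\ln\Pr[\min(X_1,\dots,X_n)\geq kx]$ are asymptotically equivalent to $\bar\chi(\lambda_1,\dots,\lambda_n)^{-1}\min_i\ln\Pr[X_i\geq x]$. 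Dividing the sandwich inequality by $\min_i\ln\Pr[X_i\geq x]$, which is negative and tends to $-\infty$ (so the inequalities reverse), and letting $x\to\infty$, the squeeze theorem gives the claim; positivity of $\Pr[(X_1,\dots,X_n)\in xA]$ for large $x$ is automatic since it dominates $\Pr[\min(X_1,\dots,X_n)\geq Kx]$, which is positive for large $x$ (an assumption already implicit in Theorem~\ref{main.thm}).

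The second part is entirely parallel: $[0,k]^n\subset A\subset[0,K]^n$ gives $\Pr[\max(X_1,\dots,X_n)\leq kx]\leq\Pr[(X_1,\dots,X_n)\in xA]\leq\Pr[\max(X_1,\dots,X_n)\leq Kx]$ for $x>0$; Theorem~\ref{main.thm}(ii) applies at the points $kx,Kx\downarrow0$, and the slow variation of $\ln F_k$ at zero together with $\ln F_k\sim\lambda_k\ln F_0$ gives $\min_i\ln F_i(cx)\sim\min_i\ln\Pr[X_i\leq x]$ for fixed $c>0$; the squeeze then yields the result. I do not foresee a genuine obstacle; the two points that require care are (a) the use of slow variation to move the constants $k,K$ inside the argument at the log scale, which is exactly what the extra hypothesis buys and where Theorem~\ref{main.thm} alone would not suffice, and (b) the sign bookkeeping when dividing through by the negative quantity $\min_i\ln\Pr[X_i\geq x]$ (resp.\ $\min_i\ln\Pr[X_i\leq x]$).
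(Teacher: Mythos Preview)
Your proposal is correct and follows essentially the same approach as the paper: sandwich $\Pr[(X_1,\dots,X_n)\in xA]$ between two orthant probabilities using the inclusions on $A$, apply Theorem~\ref{main.thm} at the rescaled points, and use the slow-variation hypothesis to absorb the constants $k,K$. The paper's proof is terser---it records only the sandwich and the slow-variation consequence $\ln\Pr[X_i\leq Kx]\sim\ln\Pr[X_i\leq kx]\sim\ln\Pr[X_i\leq x]$---while you spell out the intermediate step through $(\max_i\lambda_i)\ln F_0$ and the sign bookkeeping, but the argument is the same.
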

\begin{remark}
Taking $A = \{\bs x\in \mathbb R^n: x_i\geq 0, i=1,\dots,n, \sum x_i \leq
1\}$ in the second part, one can, for instance, compute the asymptotics
of $\Pr[X_1+\dots+X_n\leq x]$ as $x\to 0$.
 
The assumption on the marginal distributions
covers, e.g., distributions which are regularly varying at zero as
well as those which are slowly varying at zero. It excludes
  distributions with very fast decay at zero, such as the normal inverse
  Gaussian. Note that when $F_0$ is regularly varying, one can relax
  the assumptions on $A$ and only assume that $\overline A \in
  (0,\infty)^n$ in the first part and that $A$ is bounded in the
  second part. 
\end{remark}
\begin{proof}
The proof of the two parts being very similar, we focus on the second
part of the corollary. By the assumptions on $A$, 
\begin{align*}
\Pr[\max(X_1,\dots,X_n)\leq xk ]&\leq 
 \Pr[(X_1,\dots,X_n) \in xA] \\ &\leq \Pr[\max(X_1,\dots,X_n)\leq xK ].
\end{align*}
On the other hand, since $\ln F_0(x)$ is slowly varying at zero,
$$
\ln \Pr[X_i \leq Kx] \sim \ln \Pr[X_i \leq kx] \sim \ln \Pr[X_i \leq x]
$$
as $x\to 0$. 

\end{proof}



\paragraph{Example}\label{app.sec}
In this example we show how the asymptotic results obtained in this
note may be used to analyze the tail behavior of a portfolio of
options in the multidimensional Black-Scholes model. It should be
emphasized that the multidimensional Black-Scholes model does not
provide an adequate description of market movements in times of market
stress \citep{mcneil2010quantitative}. Nevertheless, this model, and
more generally the multivariate Gaussian distribution is still widely
used by practitioners for day-to-day risk management and it is
therefore important to understand
the tail behavior of portfolios in this model. 



Fix a time horizon $T$ and let $(X_1,\dots,X_n)$ denote the vector of logarithmic returns of $n$
risky assets over this time horizon. The asset
prices at date $T$ are then given
by $S_i = e^{X_i}$ for $i=1,\dots,n$ where we have assumed without
loss of generality that the initial values of all assets are normalized
to $1$. We suppose that the $n$ risky assets follow the
multidimensional Black-Scholes model. This means that the distribution
of the vector $(X_1,\dots,X_n)$ is Gaussian, and we denote by $\cm
T$ its covariance matrix and by $\bs \mu T$ its mean vector. 

We are interested in the tail behavior of a long-only portfolio of
European call options written on $n$ risky assets. To simplify
the discussion we assume that the portfolio contains exactly one
option on each of the risky assets, but the setting can obviously be
extended to an arbitrary number of options. The log-strikes of the options
will be denoted by $(k_1,\dots,k_n)$ and the maturity dates by
$(T_1,\dots,T_n)$, where $T_i > T$ for $i=1,\dots,n$. Assuming that
the interest rate is zero, the price of $i$-th option at date $T$ is
given by the Black-Scholes formula \citep{blackscholes}:
$$
P_i = e^{X_i} \mathcal N(d_+)- e^{k_i} \mathcal N(d_-),\quad d_{\pm} = \frac{X_i -
  k_i}{\sigma_i \sqrt{T_i-T}}\pm \frac{\sigma_i \sqrt{T_i -
    T}}{2},\quad \sigma_i = \sqrt{\cm_{ii}},
$$
where $\mathcal N$ is the standard normal distribution function. 

For a real-world risk management application it would of course be too
naive to assume that the volatility $\sigma_i$, which is used to price
the option, is constant and equal to $\sqrt{\cm_{ii}}$. In practice one
needs either to assume a multivariate Gaussian distribution for both
stock returns and volatilities, or to introduce the so-called implied
volatility skew, that is, assume that $\sigma_i$ is a (typically
decreasing) deterministic function of $S_i$. This example should
therefore be seen as a toy example whose main purpose is to illustrate
and motivate the theory of the paper. The
development of a full-scale risk management application of this theory
is left to further research. 

The following proposition clarifies the asymptotic behavior of the probability
$\Pr[P_1+ \dots + P_n \leq z]$ as $z$ tends to $0$. It is
surprising that even though the tails of asset returns are very thin
(Gaussian) in the Black-Scholes model, the distribution of a portfolio
of options has power-law tails. This reflects the fact that options
are much more risky than stocks. 
\begin{proposition}\label{bsopt.prop}
As $z$ tends to $0$, 
$$
\ln \Pr[P_1+ \dots + P_n \leq z]\sim  \frac{\ln
  z}{\inf_{\bs w\in \Delta_n} \bs w^\top \Sigma \bs w},
$$
where $\Sigma$ is a $n\times n$ matrix with elements given by $$\Sigma_{ij} =
\frac{\cm_{ij} T}{\sigma_i \sigma_j \sqrt{(T_i-T)(T_j-T)}}.$$ 
\end{proposition}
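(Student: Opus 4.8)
The plan is to reduce the statement to the second part of Corollary \ref{main2.thm}, applied to the random vector $(P_1,\dots,P_n)$, together with the weak lower tail dependence function of the Gaussian copula obtained in Proposition \ref{gausstail.prop}. Two ingredients are needed: the copula of $(P_1,\dots,P_n)$ and the log-scale behaviour of each marginal distribution function $F_{P_i}$ at $0$. The copula is immediate: the Black--Scholes price $P_i$ is a continuous, strictly increasing function $g_i$ of $X_i$ (the delta $\mathcal N(d_+)$ is strictly positive and $S_i=e^{X_i}$ is increasing in $X_i$), mapping $\mathbb R$ bijectively onto $(0,\infty)$; since copulas are invariant under strictly increasing transformations of the margins, the copula of $(P_1,\dots,P_n)$ equals that of $(X_1,\dots,X_n)$, namely the Gaussian copula with correlation matrix $R_{ij}=\cm_{ij}/(\sigma_i\sigma_j)$ (the common scalar $T$ in the covariance $\cm T$ cancels). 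Assuming, as is implicit in the setup, that $\cm$ is nondegenerate, Proposition \ref{gausstail.prop} applies and this copula admits a weak lower tail dependence function $\chi$.

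The marginal asymptotics are the crux of the argument. As $X_i\to-\infty$ both $d_+$ and $d_-$ tend to $-\infty$, and the two summands $e^{X_i}\mathcal N(d_+)$ and $e^{k_i}\mathcal N(d_-)$ of the Black--Scholes price decay at the same leading exponential rate, so they must be compared, not bounded separately. The plan is to use the Black--Scholes identity $e^{X_i}\phi(d_+)=e^{k_i}\phi(d_-)$ (immediate from $d_+-d_-=\sigma_i\sqrt{T_i-T}$ and $d_++d_-=2(X_i-k_i)/(\sigma_i\sqrt{T_i-T})$) together with the Mills-ratio expansion of $\mathcal N$ near $-\infty$, to second order, to obtain $P_i=e^{X_i}\phi(d_+)\,\frac{\sigma_i\sqrt{T_i-T}}{d_+d_-}\,(1+o(1))$, a positive expression in which the polynomial factor $\sigma_i\sqrt{T_i-T}/(d_+d_-)$ is negligible against $\phi(d_+)$ on the log scale (its positivity being consistent with $P_i>0$). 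Since $d_+^2\sim X_i^2/(\sigma_i^2(T_i-T))$, taking logarithms gives
\[
\ln P_i\sim-\frac{X_i^2}{2\,\sigma_i^2\,(T_i-T)},\qquad X_i\to-\infty .
\]
Inverting $g_i$ then yields $(g_i^{-1}(z))^2\sim-2\sigma_i^2(T_i-T)\ln z$ as $z\downarrow0$, and combining this with the one-variable Gaussian tail estimate $\ln\Pr[X_i\le y]\sim-y^2/(2\sigma_i^2T)$ as $y\to-\infty$ (the single-component case of the lemma behind Proposition \ref{gausstail.prop}), we get
\[
\ln F_{P_i}(z)=\ln\Pr[X_i\le g_i^{-1}(z)]\sim\frac{T_i-T}{T}\,\ln z,\qquad z\downarrow0 .
\]
Hence $\ln F_{P_i}$ is slowly varying at $0$ and $\ln F_{P_i}(z)\sim\lambda_i\ln z$ with $\lambda_i:=(T_i-T)/T>0$, so the marginal hypotheses of Corollary \ref{main2.thm} hold with reference function $F_0(z)=z$.

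With both ingredients in hand the conclusion follows by assembly. Take $A=\{\bs x\in[0,\infty)^n:\sum_i x_i\le1\}$, so that $[0,1/n]^n\subset A\subset[0,1]^n$; since $P_i\ge0$ almost surely, $\{(P_1,\dots,P_n)\in zA\}=\{P_1+\dots+P_n\le z\}$, and the second part of Corollary \ref{main2.thm} gives
\[
\ln\Pr[P_1+\dots+P_n\le z]\sim\frac{\min_i\ln\Pr[P_i\le z]}{\chi(\lambda_1,\dots,\lambda_n)}\sim\frac{(\max_i\lambda_i)\,\ln z}{\chi(\lambda_1,\dots,\lambda_n)},
\]
where we used $\ln z<0$. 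By Proposition \ref{gausstail.prop}, $\chi(\lambda_1,\dots,\lambda_n)=(\max_i\lambda_i)\min_{\bs w\in\Delta_n}\bs w^\top\Sigma\bs w$ with $\Sigma_{ij}=R_{ij}/\sqrt{\lambda_i\lambda_j}$; substituting $R_{ij}=\cm_{ij}/(\sigma_i\sigma_j)$ and $\lambda_i=(T_i-T)/T$ turns this into exactly $\Sigma_{ij}=\cm_{ij}T/(\sigma_i\sigma_j\sqrt{(T_i-T)(T_j-T)})$, the matrix in the statement, while the factor $\max_i\lambda_i$ cancels, which is the claim. I expect the only genuinely delicate step to be the marginal estimate: the two Black--Scholes terms vanish at the same rate, so the subtraction has to be carried out carefully (via the parity identity and the second-order Mills expansion) for the true decay rate --- and the positivity of $P_i$ --- to emerge; once $\ln F_{P_i}(z)\sim\lambda_i\ln z$ is established, everything else is a routine invocation of Corollary \ref{main2.thm} and Proposition \ref{gausstail.prop}.
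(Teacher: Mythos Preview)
Your proof is correct and follows essentially the same route as the paper: identify the copula of $(P_1,\dots,P_n)$ as Gaussian via monotonicity of the Black--Scholes price, establish $\ln F_{P_i}(z)\sim\frac{T_i-T}{T}\ln z$ from the Mills-ratio asymptotics of $\mathcal N$, and then invoke Corollary~\ref{main2.thm} together with Proposition~\ref{gausstail.prop}. The only cosmetic differences are that you work with the unstandardized $X_i$ rather than $\tilde X_i=(X_i-\mu_iT)/(\sigma_i\sqrt T)$ and that you make the subtraction step explicit via the identity $e^{X_i}\phi(d_+)=e^{k_i}\phi(d_-)$, whereas the paper records the resulting asymptotic \eqref{eqf} directly.
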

\begin{proof} $P_1,\dots,P_n$ are obviously increasing and continuous
  functions of the Gaussian random variables
  $(X_1,\dots,X_n)$. Therefore, the copula of $(P_1,\dots,P_n)$ is the
  Gaussian copula with correlation matrix with elements
  $R_{ij} = \frac{\cm_{ij}}{\sigma_i\sigma_j}$. It remains to
  characterize the asymptotic behavior of the distribution functions
  of $P_1,\dots,P_n$.

Let $$\tilde X_i = \frac{X_i - \mu_i T}{\sigma_i\sqrt{T}}$$ for $i=1,\dots,n$ and define
\begin{align*}
f_i(x) &= e^{\mu_i T + x\sigma_i \sqrt{T}} \mathcal N\{d_+(x)\} - e^{k_i} \mathcal N\{d_-(x)\},\\d_{\pm}(x) &=
x \sqrt{\frac{T}{T_i-T}}- \frac{ \mu_i T +
  k_i}{\sigma_i \sqrt{T_i-T}}\pm \frac{\sigma_i \sqrt{T_i -
    T}}{2}.
\end{align*}
Then, $\tilde X_i$ is a standard normal random variable.   From the well-known equivalence 
$$
\mathcal N(x)\sim \frac{e^{-\frac{x^2}{2}}}{|x|\sqrt{2\pi}},\quad x\to -\infty,
$$
one easily deduces that 
\begin{align}
f_i(x) \sim
\frac{\sigma_i(T_i-T)^{\frac{3}{2}}}{x^2T\sqrt{2\pi}}e^{k_i -
  \frac{d_-^2(x)}{2}},\quad x\to -\infty.  \label{eqf}
\end{align}
Taking the logarithm, we obtain
$$
\ln f_i(x) \sim -\frac{x^2 T}{2(T_i-T)},\quad x\to -\infty
$$
and 
$$
f_i^{-1}(u)\sim \sqrt{2\frac{T_i-T}{T}\ln \frac{1}{u}},\quad u\to
0. 
$$
Therefore, the distribution function of $P_i$ satisfies
$$
\ln \Pr[P_i \leq x] = \ln \mathcal N\{f_i^{-1}(x)\} \sim
-\frac{f_i^{-1}(x)^2}{2} \sim - \frac{T_i-T}{T} \ln \frac{1}{x},\quad
x\downarrow 0,
$$
so that the assumptions of Corollary \ref{main2.thm} are satisfied
with $$\lambda_i = \frac{T_i-T}{T}$$ and $F_0(x) = {1}/{x}$ and the
result follows by applying Proposition \ref{gausstail.prop} and
Corollary \ref{main2.thm}. 
\end{proof}

\paragraph{Numerical illustration}
Figure \ref{optport.fig} plots the distribution function of the
portfolio of three call options written on three different assets, on
the log-log scale. The numerical values of parameters are 
$$
\cm = \left(\begin{aligned}& 0.2 && 0.1 && 0.1 \\ &0.1 && 0.2 && 0.1 \\
    & 0.1 && 0.1 && 0.2\end{aligned}\right), \quad \bs \mu = \left(\begin{aligned}& -0.1 \\ &-0.1 \\
    & -0.1\end{aligned}\right).$$
The time horizon is $T = 0.25$ (years), the option log-strikes are
$k_i = 0$ and the option maturities are $T_i = 0.5$ for $i=1,2,3$. These
values can be considered typical for financial markets. 

The graph plots the distribution function of the option portfolio,
together with the straight line with slope 
$$
\frac{1}{\inf_{\bs w\in \Delta_n}\bs w^\top \Sigma \bs w}
$$
predicted by Proposition \ref{bsopt.prop}, in the log-log scale. We observe power-law decay
in the left tail of the distribution function, and the rate of the
decay (slope of the log-log plot) seems to be close to the theoretical
prediction.  We emphasize the fact that our results may not be used to
actually compute the distribution function, since they only provide
the log-scale asymptotics. Nevertheless, they provide an adequate idea
of the tail behavior of the distribution. 

\begin{figure}
\centerline{\includegraphics[width=0.6\textwidth]{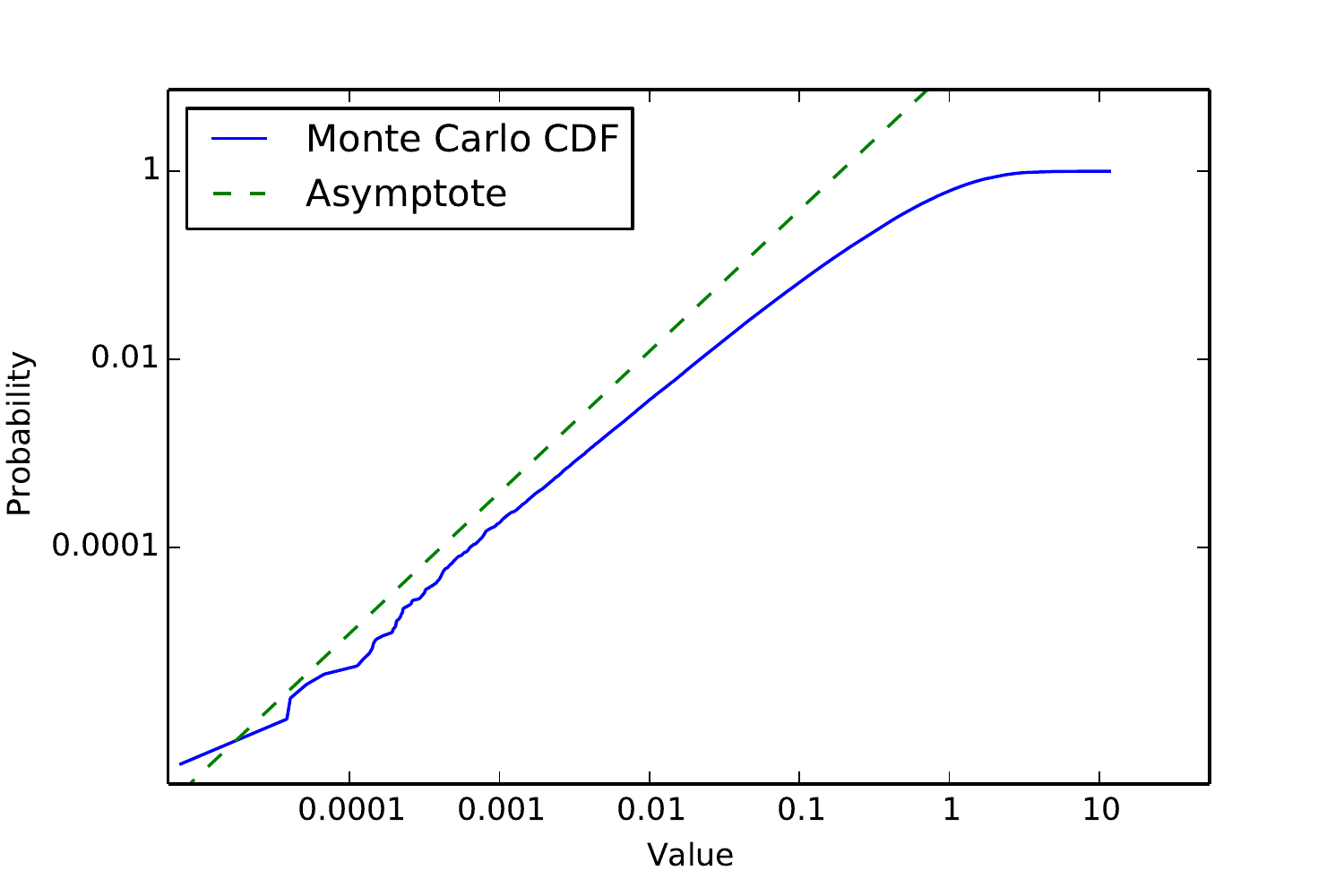}}
\caption{Left tail of the distribution function of the portfolio of
  three call options in a multidimensional Black-Scholes model.}
\label{optport.fig}
\end{figure}

\paragraph{Comparison with hidden regular variation}
When assets and options are identical, the left tail 
behavior of a portfolio of options in the multidimensional
Black-Scholes model can also be analyzed using hidden regular
variation. For the purposes of this illustration, assume that the
portfolio contains two options ($n=2$), leaving the general case for
further research. Let $Z_i = -1/\ln \{1-\mathcal N(\tilde X_i)\} $ for $i=1,2$. It is
known \citep[page 6]{weller2014sum} that for all $z_1,z_2>0$, 
$$
t\Pr\Big[\frac{Z_1}{b_0(t)}> z_1,\frac{Z_2}{b_0(t)}> z_2
\Big]\to (z_1 z_2)^{-\frac{1}{2\eta}}\quad \text{as $t\to
  \infty$}
$$
where $\eta = (1+\rho)/{2}$ and $b_0(t) =  U^{-1}(t)$ with
\begin{align}
U(t) = t^{1/\eta} L_0(t),\quad L_0(t)=  (1+\rho)^{3/2} (1-\rho)^{-1/2} (4\pi \ln
t)^{-\rho/(1+\rho)}. \label{L0}
\end{align}
For all $x_1,x_2>0$, 
$$
\Pr[P_1^{-1}> tx_1, P_2^{-1}(tx_2)] = \Pr[Z_1 > g_1(tx_1),
Z_2 > g_2(tx_2)],
$$
where for $i=1,2$, 
$$
g_i(t) = -\frac{1}{\ln \left[1- \mathcal N\left\{f_i^{-1}\left(\frac{1}{t}\right)\right\}\right]}.
$$
As $t\to \infty$, clearly,
$$
g_i(t) \sim\tilde g_i(t):= \frac{1}{\mathcal N\left\{f_i^{-1}\left(\frac{1}{t}\right)\right\}}.
$$
Moreover, using the equivalent \eqref{eqf}, and the asymptotic
expansion for $\mathcal N^{-1}$ given, e.g., in \cite[page
828]{blair1976rational}, it is easy to show that 
$$
\tilde g_i^{-1}(u) = \frac{1}{f_i\{\mathcal N^{-1}(1/u)\}} \sim C_i u^\frac{T}{T_i - T} (\ln u)^{1-\frac{T}{2(T_i-T)}}
e^{- c_i
  \sqrt{\frac{2T}{T_i-T} \ln u }}
$$
where 
$$
C_i = \frac{2 T \sqrt{2\pi} }{\sigma_i (T_i-T)^{3/2}}
e^{\frac{c_i^2}{2} - k_i} (2\sqrt{\pi})^\frac{T}{T_i-T}\quad\text{and}\quad c_i =
\frac{\mu_i T + k_i}{\sigma_i \sqrt{T_i-T}} + \frac{\sigma_i \sqrt{T_i-T}}{2}.
$$
In other words, $$\tilde g_i^{-1}(u)  = u^{\frac{T}{T_i-T}} L_i(u),$$ where
$L_i$ is a slowly varying function as $u\to \infty$. It follows that
$$\tilde g_i(t) = t^\frac{T_i-T}{T} \tilde L_i(t),$$ where $\tilde L_i$
is slowly varying as $t\to \infty$. By asymptotic inversion we can
show that $\tilde L_i$ satisfies the following relationship.
\begin{align}
\tilde L_i(t) \sim C_i^{-\frac{T_i-T}{T}} \left(\frac{T_i-T}{T}\ln
  t\right)^{\frac{1}{2}-\frac{T_i-T}{T}}
e^{c_i\frac{T_i-T}{T}\sqrt{2\ln t}},\quad t\to \infty. \label{Ltilde}
\end{align}

Now assume that the options and
the assets are identical so that $T_1 = T_2$, $\tilde L_1 = \tilde
L_2 := \tilde L$ and $\tilde g_1 = \tilde g_2:= \tilde g$. Then,
$$
\Pr[P_1^{-1}> tx_1, P_2^{-1}(tx_2)]
\sim\frac{(x_1x_2)^{-\frac{T_1-T}{2\eta T}}}{U\{\tilde g(t)\}}.
$$ 
Therefore, we conclude that the couple $(P_1^{-1},P_2^{-1})$ possesses
the hidden regular variation property, and consequently 
\begin{align*}
\Pr[P_1 + P_2 \leq z] = \Pr[(P_1^{-1}, P_2^{-1}) \in
\frac{1}{z} A ]\sim \frac{1}{U\{\tilde g(1/z)\}} \nu_0(A),
\end{align*}
where 
$$
A = \{x>0, y>0: \frac{1}{x} + \frac{1}{y} \leq 1\}
$$
and $\nu_0$ is a measure defined by 
$$
\nu_0((x_1,\infty)\times (x_2,\infty)) = (x_1x_2)^{-\frac{T_1-T}{2\eta
    T}}.
$$
An easy computation shows that 
$$
\nu_0(A) = \gamma B(1+\gamma, 2+\gamma),
$$
where $B$ is the Euler beta function and $$\gamma =
\frac{T_1-T}{2\eta T}.$$ 

Finally, we have shown that as $z\to 0$, 
$$
\Pr[P_1 + P_2 \leq z] \sim z^{\frac{T_1-T}{\eta T}}\frac{\gamma B(1+\gamma,
  2+\gamma)}{\tilde L(1/z)^{1/\eta} L_0(z^\frac{T-T_1}{T})},
$$
where the function $L_0$ is given explicitly in \eqref{L0} and the
function $\tilde L$ satisfies the asymptotic relation \eqref{Ltilde}. 

It is easy to see that in this case, 
$$
\inf_{\bs w \in \Delta_n} \bs w^\top \Sigma \bs w = \frac{\eta T}{T_1-T},
$$
so that the leading term of the above formula agrees with Proposition
\ref{bsopt.prop}. In conclusion, in this example, the hidden regular
variation theory allows to compute the sharp asymptotics under a
rather restrictive assumption of homogeneous portfolio, while the
methodology of this paper is applicable in the general case but only
enables us to compute the log scale asymptotics. 

\section*{Acknowledgements}
This research was supported by the ANR project FOREWER
  (ANR-14-CE05-0028) and by the chair ``Financial Risks'' of the Risk Foundation, sponsored by Soci\'et\'e G\'en\'erale.


\end{document}